\tikzset{->-/.style={decoration={
  markings,
  mark=at position .5 with {\arrow{>}}},postaction={decorate}}}
\newtheorem{theorem}{Theorem}[section]
\newtheorem{lemma}[theorem]{Lemma}
\newtheorem{corollary}[theorem]{Corollary}
\newtheorem{proposition}[theorem]{Proposition}
\begin{document}

\title{Inversions of Semistandard Young Tableaux}
\author{Paul Drube\\
\small Department of Mathematics and Statistics\\
\small Valparaiso University\\
\small \tt paul.drube@valpo.edu 
}

\maketitle

\begin{abstract}
A tableau inversion is a pair of entries from the same column of a row-standard tableau that lack the relative ordering necessary to make the tableau column-standard.  An $i$-inverted Young tableau is a row-standard tableau with precisely $i$ inversion pairs, and may be interpreted as a generalization of (column-standard) Young tableau.  Inverted Young tableau that lack repeated entries were introduced by Fresse to calculate the Betti numbers of Springer fibers in Type A, and were later developed as combinatorial objects in their own right by Beagley and Drube.  This paper generalizes earlier notions of tableau inversions to row-standard tableaux with repeated entries, yielding an interesting new generalization of semistandard (as opposed to merely standard) Young tableaux.  We develop a closed formula for the maximum numbers of inversion pairs for a row-standard tableau with a specific shape and content, and show that the number of $i$-inverted tableaux of a given shape is invariant under permutation of content.  We then enumerate $i$-inverted Young tableaux for a variety of shapes and contents, and generalize an earlier result that places $1$-inverted Young tableaux of a general shape in bijection with $0$-inverted Young tableaux of a variety of related shapes.
\end{abstract}

\section{Introduction}
\label{sec: intro}

Consider the non-increasing sequence of positive integers $\lambda = (\lambda_1,\lambda_2,\hdots,\lambda_m)$, and let $N = \lambda_1 + \hdots + \lambda_m$.  A \textbf{Young diagram} $Y$ of shape $\lambda$ is a left-justified array of $N$ total boxes such that there are $\lambda_i$ boxes in the $i^{th}$ row of $Y$.  A \textbf{(semistandard) filling} of a Young diagram $Y$ is an assignment of positive integers (possibly repeated) to the boxes of $Y$ such that integers strictly increase from left-to-right across each row and weakly increase from top-to-bottom down each column.  We assume that no positive integers are skipped, so that the boxes of $Y$ are filled with $1,2,\hdots,M$ for some $M \leq N$.  We call the resulting array $T$ a \textbf{semistandard Young tableau} of shape $\lambda$.  If each of $1,2,\hdots,N$ appears precisely once in $T$, the semistandard Young tableau $T$ qualifies as a \textbf{standard Young tableau} of shape $\lambda$.  In this paper we will also need to consider a generalization of semistandard fillings where integers strictly increase from left-to-right across each row but no longer need to weakly increase down each column.  We refer to such an array as a \textbf{row-standard tableau}.

If $\mu = (\mu_1,\mu_2,\hdots,\mu_M)$ is an ordered partition of $N$, we say that a semistandard tableau $T$ of shape $\lambda$ has \textbf{content} $\mu$ if its boxes are filled with precisely $\mu_1$ copies of $1$, $\mu_2$ copies of $2$, etc.  We often use the abbreviated notation $\mu = 1^{\mu_1} 2^{\mu_2} \hdots M^{\mu_M}$.  Thus a standard Young tableau is simply a semistandard Young tableau with content $\mu = 1^1 2^1 \hdots N^1$.  We denote the entire set of semistandard Young tableaux with shape $\lambda$ and content $\mu$ by $S(\lambda,\mu)$, and the set of standard Young tableaux with shape $\lambda$ by $S(\lambda)$.  For a great introduction to Young tableaux, see \cite{Fulton}.

Now consider the permutation $\sigma \in S_n$.  An inversion of $\sigma$ is a pair of integers $ i,j$ satisfying $i<j$ and $\sigma(i) > \sigma(j)$.  In this situation we call $(i,j)$ an inversion pair of $\sigma$.  Denote the number of distinct inversion pairs of $\sigma$ by $n_{inv}(\sigma)$.

As introduced by Fresse in \cite{Fresse}, permutation inversions admit a generalization to row-standard tableaux with non-repeated entries.  Let $Y$ be a Young diagram of shape $\lambda$ whose boxes have been filled with $1,2,\hdots,N$ to produce the row-standard tableau $\tau$.  Following \cite{Fresse}, a pair of entries $i,j$ from the same column of $\tau$ participate in an \textbf{inversion} of $\tau$ if $i<j$ and either of the following conditions hold:
\begin{enumerate}
\item At least one of $i$ and $j$ lacks an entry directly to its right, and $i$ is below $j$.
\item $i$ is directly to the left of $i'$, $j$ is directly to the left of $j'$, and $i' > j'$.
\end{enumerate}

In this situation, we write $(i,j)_\tau$ or simply $(i,j)$ and say that $i,j$ constitute a single inversion pair of $\tau$.  If a row-standard tableau $\tau$ has precisely $K$ distinct inversion pairs we write $n_{inv}(\tau) = K$.  Notice that a row-standard tableau $\tau$ is also column-standard and hence is a standard Young tableau if and only if $n_{inv}(\tau) = 0$.  Also notice that our definition of tableau inversion specializes to the earlier notion of permutation inversion if one interprets $\sigma$ as a single-column row-standard tableau whose entries appear as $\sigma(1),\hdots,\sigma(n)$ from top-to-bottom. 

As shown in \cite{Fresse}, for any row-standard tableau $\tau$ without repeated entries one can always recursively eliminate inversions to produce a unique column-standard tableau with no inversions.  The resulting standard Young tableau is known as the \textbf{standardization} of $\tau$ and is written $st(\tau)$. As any such $\tau$ may transformed into a standard Young tableau $st(\tau)$ by recursively removing inversions, we henceforth refer to the row-standard $\tau$ as an \textbf{inverted (standard) Young tableau} based on $st(\tau)$.  In Figure \ref{fig: standard inverted example} we show an inverted tableau of shape $\lambda = (4,3,2)$ alongside its standardization.  For a given shape $\lambda$, we denote the set of all inverted standard Young tableaux of shape $\lambda$ with precisely $i$ inversions by $S_i(\lambda)$.  Thus $S(\lambda) = S_0(\lambda)$.  We more specifically refer to elements of $S_i(\lambda)$ as \textbf{i-inverted (standard) Young tableaux} of shape $\lambda$.

\begin{figure}[h!]
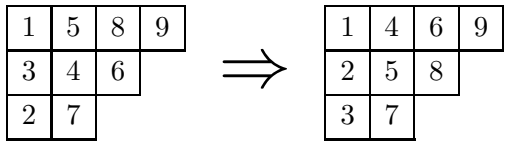

\centering
\begin{ytableau}
1 & 5 & 8 & 9 \\
3 & 4 & 6 \\
2 & 7 
\end{ytableau}
\hspace{.08in}
\raisebox{-14pt}{\scalebox{2.5}{$\Rightarrow$}}
\hspace{.08in}
\begin{ytableau}
1 & 4 & 6 & 9\\
2 & 5 & 8 \\
3 & 7
\end{ytableau}
\caption{An inverted tableau with inversion pairs $(6,8)$, $(1,3)$, $(2,3)$ and its standardization}
\label{fig: standard inverted example}
\end{figure}

Fresse introduced tableau inversions in \cite{Fresse} to calculate the Betti numbers of Springer fibers in type A.  Fixing the standard Young tableau $T$ of shape $\lambda$, he showed that the component of the Springer variety $F_\lambda$ associated with $T$ has $m^{th}$ Betti number equal to the number of $(d-m)$-inverted Young tableaux based on $T$, where $d$ is the dimension of the entire Springer variety.  Fresse also presents an algorithm for determining the number of $i$-inverted Young tableaux based on a specific standard tableau $T$.

In \cite{BD}, the author and Beagley present results enumerating the total number of $i$-inverted Young tableaux of shape $\lambda$, simultaneously ranging over all underlying standardizations.  By \cite{Fresse}, this yielded easily calculable formulas for the Betti numbers of the entire Springer variety $F_\lambda$ in a number of interesting cases.  In particular, \cite{BD} gives closed formulas for $\vert S_1(\lambda) \vert$, $\vert S_{M-1}(\lambda) \vert$, and $\vert S_{M-2}(\lambda) \vert$, where $M$ is the maximum number of inversions possible for any inverted Young tableau of shape $\lambda$.  That same paper also presents closed formulas for general $\vert S_i(\lambda) \vert$ in the case of relatively ``easy" choices for $\lambda$.

The combinatorial results of \cite{BD} also formalized earlier work on the Bar-Natan skein module of the solid torus presented by Russell in \cite{Russell}, with the generators of Russell's skein module standing in bijection with inverted Young tableaux of shape $\lambda=(n,n)$.  It is hypothesized that Russell's work extends to the $sl_n$ skein module of the solid torus for all $n \geq 2$, giving an interesting topological interpretation of inverted Young tableaux for any rectangular shape $\lambda = (n,\hdots,n)$.  An upcoming paper by the author \cite{Drube2} explicitly demonstrates this correspondence in the $n=3$ case, whereas another upcoming paper \cite{Drube1} adapts the m-diagram techniques of Tymoczko \cite{Tymoczko} to give a related geometric of inverted Young tableaux in terms of certain classes of marked planar graphs.

The primary goal of this paper is to generalize the notion of tableau inversions to the semistandard case, where repeated entries are possible, and to investigate which results from \cite{Fresse} and \cite{BD} extend to this more sophisticated case.  Although the algebraic geometry of this case has not been explicitly worked out, seeing as Spaltenstein varieties are the generalization of Springer varieties corresponding to semistandard Young tableaux, the author suspects that this paper may shed light on the Betti numbers of Spaltenstein varieties.  In the spirit of \cite{Russell} and \cite{Drube2}, the author also suspects that this semistandard generalization will be topologically realized by skein modules of the solid torus where the boundary circles are not consistently oriented.  Do note that the focus of this paper is purely combinatorial; no knowledge of algebraic varieties or skein modules is required, and Springer/Spaltenstein varieties will only be mentioned in passing.

So let $\tau$ be a row-standard tableau of shape $\lambda$ and content $\mu$, and let $i,j$ be a pair of entries from the same column of $\tau$.  Let $\lbrace i_1,i_2,\hdots \rbrace$ denote the (possibly empty) sequence of entries directly to the right of $i$ in $\tau$, read from left-to-right, and let $\lbrace j_1,j_2,\hdots \rbrace$ denote the (possibly empty) sequence of entries directly to the right of $j$, read from left-to-right.  We assert that $i,j$ participate in an \textbf{inversion} of $\tau$ if $i < j$ and one of the following holds:

\begin{enumerate}
\item At least one of $i_1$ and $j_1$ doesn't exist, and $i$ is below $j$.
\item $i_1$ and $j_1$ both exist, and $i_1 > j_1$.
\item $i_k$ and $j_k$ both exist for all $k \leq N$ with $i_k = j_k$ for all $k \leq N$, at least one of $i_{N+1}$ or $j_{N+1}$ doesn't exist, and $i$ is below $j$.
\item $i_k$ and $j_k$ both exist for all $k \leq N$ with $i_k = j_k$ for all $k \leq N$, $i_{N+1}$ and $j_{N+1}$ both exist, and $i_{N+1} > j_{N+1}$.
\end{enumerate}

In Section \ref{sec: basic results}, the somewhat redundant definition above will be streamlined utilizing what we refer to as the ``height order" on tableau entries.  The reason for the lengthier set of conditions above is that it betrays how our notion is a direct generalization of tableau inversions for standard tableaux: as $i_k = j_k$ is impossible in the case of non-repeated entries, only the first two conditions above are relevant in that situation.  If any of the conditions above hold, we once again write $(i,j)_\tau$ or simply $(i,j)$ and say that $i,j$ constitute an inversion pair of $\tau$.  We also retain our notation that $n_{inv}(\tau)$ denotes the total number of distinct inversion pairs in $\tau$.  In this case, the row-standard $\tau$ qualifies as a semistandard Young tableau if and only if $n_{inv}(\tau) = 0$.

A direct generalization of the technique from \cite{Fresse} shows that one may recursively remove inversions in any row-standard tableau $\tau$ to produce a column-standard semistandard tableau with no inversions, which we again refer to as the standardization $st(\tau)$ of $\tau$.  This standardization is merely the semistandard tableau where one has independently reordered the entries in each column so that they are weakly-increasing from top-to-bottom, and is guaranteed to be row-standard if the original $\tau$ was row-standard.  This fact prompts our definition of $\tau$ as an \textbf{inverted semistandard Young tableau} based on $st(\tau)$.  Figure \ref{fig: semistandard inverted example} shows an example of an inverted semistandard tableau with $\lambda = (4,4,3)$ and $\mu = 1^1 2^1 3^1 4^1 5^2 6^1 7^2 8^2$.  If $n_{inv}(\tau) = i$, we refer to $\tau$ as an \textbf{i-inverted semistandard Young tableau}.  For given $\lambda$ and $\mu$, we denote the set of all such tableaux with precisely $i$ inversion pairs by $S_i(\lambda,\mu)$.  If we range across all possible numbers of inversions, we collectively refer to the set of all inverted semistandard tableaux as $ I(\lambda,\mu) = \bigcup_{i=0}^\infty S_i(\lambda,\mu)$.

\begin{figure}[h!]
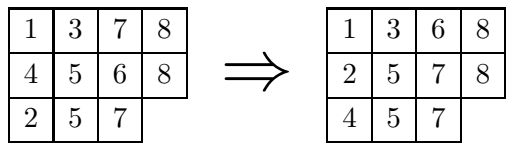

\centering
\begin{ytableau}
1 & 3 & 7 & 8 \\
4 & 5 & 6 & 8 \\
2 & 5 & 7
\end{ytableau}
\hspace{.08in}
\raisebox{-14pt}{\scalebox{2.5}{$\Rightarrow$}}
\hspace{.08in}
\begin{ytableau}
1 & 3 & 6 & 8 \\
2 & 5 & 7 & 8 \\
4 & 5 & 7
\end{ytableau}
\caption{An inverted semistandard tableau with inversion pairs $(2,4)$, $(3,5)$, $(6,7)$ and its standardization.}
\label{fig: semistandard inverted example}
\end{figure}

It should be noted that the notion of ``tableau inversion" presented here as well as in \cite{Fresse},\cite{BD} is a distinct concept from the ``inversions in standard Young tableaux" introduced by Shynar in \cite{Shynar}.  In \cite{Shynar}, a (weak) inversion in a standard Young tablau $T$ is a pair $(i,j)$ of entries such that $i < j$ and where $j$ appears both strictly south and strictly (resp. weakly) west of $j$ in $T$.  As such, Synar's inversions are a measure on standard Young tableau that do not address the more general row-standard case.  Although possibly related to the maximal possible number of inversion pairs in an inverted Young tableau $\tau$ with $st(\tau) = T$,  Shynar's distinct notion of tableau inversion will have absolutely no bearing on what follows.

\subsection{Outline of Results}
\label{subsec: outline of results}

We begin in Section \ref{sec: basic results} by generalizing a variety of basic results from \cite{BD} to the semistandard case.  Our most significant theorem in this realm is a closed formula giving the maximum possible number of inversion pairs for an inverted semistandard Young tableau of given shape and content, a result that is eventually appears as Theorem \ref{thm: maximum inversion number} and is presented in truncated form below:

\begin{theorem}
\label{thm: intro theorem 1}
Consider the shape $\lambda = (\lambda_1,\hdots,\lambda_m)$ and content $\mu = 1^{\mu_1} 2^{\mu_2} \hdots K^{\mu_K}$, and let $h_j$ be the height of the $j^{th}$ column in any tableau of shape $\lambda$.  If $I(\lambda,\mu)$ is nonempty, then the maximum number of inversions for any element of $I(\lambda,\mu)$ is:
\begin{center}
$\displaystyle{M_{\lambda,\mu} = \sum_j \binom{h_j}{2} - \sum_j \binom{\mu_i}{2}}$
\end{center}
\end{theorem}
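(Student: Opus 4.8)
The plan is to recast the statement as a claim about \emph{non}-inversions. Every pair of entries in a common column of a tableau of shape $\lambda$ is a candidate inversion pair, and no other pair can be one, so the number of such candidate pairs equals $\sum_j \binom{h_j}{2}$ regardless of which $\tau \in I(\lambda,\mu)$ we look at. Consequently the theorem is equivalent to the assertion that the minimum, over $\tau \in I(\lambda,\mu)$, of the number of same-column pairs of $\tau$ that are \emph{not} inversions equals $\sum_i \binom{\mu_i}{2}$. I would prove this as two inequalities: (I) every $\tau \in I(\lambda,\mu)$ has at least $\sum_i \binom{\mu_i}{2}$ non-inversion same-column pairs, and (II) there is a $\tau \in I(\lambda,\mu)$ achieving equality (here the hypothesis that $I(\lambda,\mu)$ is nonempty is exactly what is needed). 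Throughout I would use the height-order criterion for when a same-column pair is an inversion that was set up earlier in this section, since it gives a single uniform rule rather than forcing one to juggle the four clauses of the definition.

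Part (I) is the part I expect to be the main obstacle, and I would attack it by constructing, for each $\tau \in I(\lambda,\mu)$, an injection $\Phi$ from the set of unordered pairs of equal-valued entries of $\tau$ (of which there are $\sum_i \binom{\mu_i}{2}$) into the set of non-inversion same-column pairs of $\tau$. The easy part is that any same-column pair with two equal entries is automatically a non-inversion, because every clause of the definition requires $i<j$; such pairs $\Phi$ fixes. The work is in handling a pair of copies of a value $v$ lying in different columns, say a copy $a$ in column $c$ and a copy $b$ in a column $c'>c$: writing $b'$ for the entry of $b$'s row lying in column $c$, strictness of rows forces the value of $b'$ to be $<v$, and one checks --- splitting into the case where $a$ sits at the end of its row and the case where it does not, and using that $\lambda$ is a partition to locate the rows of $a$ and $b$ relative to one another --- that $\{a,b'\}$ is a non-inversion same-column pair; the underlying reason is that the copy $v$ that $b'$ carries to its right forces the arm comparison (or, when arms run out, the vertical placement) to go the ``wrong'' way. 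I would then verify that $\Phi$ is injective: two copies of the same value slide to the same column but to distinct cells, so their images differ; and a collision between an image coming from value $v$ and one coming from a different value $w$ would force both $v<w$ and $w<v$ by reading off two rows, which is impossible. An alternative route for (I) is induction on the total excess $\sum_i(\mu_i-1)$, splitting off one copy of a repeated value $v$ into a brand-new value inserted just above $v$ in the order; this drops the excess by $\mu_v-1$ and, after tracking how merging and splitting affect inversion counts, bottoms out at the standard case $\mu=1^N$, where the claim is the trivial statement that there are at least zero non-inversions.

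Part (II) I would establish by exhibiting a concrete maximally inverted tableau $\tau^\ast$. One cannot simply reverse every column of a semistandard tableau of the given shape and content, since the result need not be row-standard --- a short row with large entries can collide with a long one in the same pair of columns. Instead I would build $\tau^\ast$ greedily: start from a semistandard $T$ of shape $\lambda$ and content $\mu$ chosen so that the copies of each value are spread over as many columns as possible, and then permute the entries within each column so as to push the larger values as high as row-standardness allows. The verification reduces to showing that the non-inversion same-column pairs of $\tau^\ast$ are \emph{exactly} the pairs in the image of the injection $\Phi$ of part (I) --- equivalently, that $\tau^\ast$ has no ``extra'' non-inversions --- which then gives $n_{inv}(\tau^\ast) = \sum_j\binom{h_j}{2} - \sum_i\binom{\mu_i}{2}$. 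Together with (I) this pins the maximum down as $M_{\lambda,\mu}$. As noted, the delicate step is (I): the non-inversions it must account for are not only the equal-value pairs that happen to share a column, but also those forced into existence by repeated values that occupy different columns.
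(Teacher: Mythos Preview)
Your approach is correct and takes a genuinely different route from the paper's. The paper constructs the extremal tableau $\tau_{\max}$ first (filling columns right-to-left with the largest remaining entries, subject to row-standardness), then compares it via a reindexing to the unique maximally inverted \emph{standard} tableau $\widetilde{\tau}_{\max}$ of the same shape, and shows through a two-case analysis (all copies of a value in one column versus split across two adjacent columns) that passing from $\widetilde{\tau}_{\max}$ to $\tau_{\max}$ costs exactly $\sum_i \binom{\mu_i}{2}$ inversions; maximality is then argued by showing any other $\tau$ loses strictly more. Your reformulation via non-inversions and the injection $\Phi$ is more direct for the upper bound: it handles every $\tau \in I(\lambda,\mu)$ at once without any reduction to the distinct-entry case, and the injectivity check is clean --- the key point, that the slid-back cell $b'$ always lands on the correct side of the height order relative to $a$ (because either $a$ ends its row and the partition shape forces $b'$ to sit above it, or the entry right of $a$ exceeds $v$ while the entry right of $b'$ is at most $v$), is exactly the observation you outline. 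What your route buys is a conceptual separation of ``the bound holds for all $\tau$'' from ``some $\tau$ attains it''; what the paper's buys is a fully explicit $\tau_{\max}$ together with uniqueness. Your Part~(II) is still only a sketch: the description of $\tau^\ast$ is vague, and the verification that its non-inversions coincide exactly with the image of $\Phi$ remains to be carried out. Adopting the paper's right-to-left greedy construction as your $\tau^\ast$ would be a natural way to finish.
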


Also included in Section \ref{sec: basic results} is the most theoretically significant result of the paper, a demonstration that the number of $i$-inverted semistandard Young tableaux of a fixed shape is invariant under ``permutation of content".  Eventually appearing as Theorem \ref{thm: permutation invariance, specific i}, notice that the simplified version shown below specializes to the well-known invariance of semistandard Young tableau presented in \cite{Fulton} and elsewhere if we let $i = 0$:

\begin{theorem}
\label{thm: intro theorem 2}
Take any shape $\lambda$ and any content $\mu$ using the entries $1,2,\hdots,M$.  For any permutation $\sigma$ on $M$ letters, we have $\vert S_i(\lambda,\mu) \vert = \vert S_i(\lambda,\sigma(\mu))\vert$ for all $i \geq 0$.
\end{theorem}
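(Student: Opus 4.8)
The plan is to reduce the general statement to the case of a transposition $\sigma = (k, k+1)$ swapping two adjacent values, since the symmetric group on $M$ letters is generated by adjacent transpositions and permutation of content is manifestly a group action on contents. So it suffices to construct, for each shape $\lambda$ and each content $\mu = 1^{\mu_1} \cdots K^{\mu_K}$, a bijection $\Phi \colon S_i(\lambda, \mu) \to S_i(\lambda, \mu')$ where $\mu'$ is obtained from $\mu$ by swapping $\mu_k$ and $\mu_{k+1}$. The natural candidate is a version of the Bender--Knuth involution adapted to row-standard (rather than column-standard) tableaux: restrict attention to the entries equal to $k$ or $k+1$, call a $k$ ``free'' if the box directly below it in the same column is not a $k+1$ (and symmetrically a $k+1$ is ``free'' if the box directly above it is not a $k$), and in each column independently replace the string of free $k$'s and free $k+1$'s (which, because entries strictly increase along rows, occupy a contiguous vertical run once the locked $k/k{+}1$ pairs are removed) by the string with the counts of $k$ and $k+1$ interchanged, keeping all other entries fixed.

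First I would check that $\Phi$ is well-defined, i.e. that $\Phi(\tau)$ is again a row-standard tableau of shape $\lambda$ and content $\mu'$: row-standardness is the only nontrivial point, and it follows because the only rows whose content changes are those where a box held a $k$ or $k{+}1$, and within such a row a box strictly between entries $a < b$ with $a \le k-1$ and $b \ge k+2$ (or against the ends of the row) can freely be reassigned to either $k$ or $k+1$; I need to confirm the Bender--Knuth rule never tries to put two copies of the same value in one row, which is automatic because each row has at most one box equal to $k$ and at most one equal to $k+1$. Then I would verify $\Phi$ is an involution, hence a bijection, exactly as in the classical argument: the locked $k/k{+}1$ pairs are intrinsic (a box is locked iff it is a $k$ with a $k{+}1$ directly below, or a $k{+}1$ with a $k$ directly above) and are preserved by $\Phi$, so $\Phi$ acts columnwise on the free runs by the elementary swap-the-multiplicities map, which squares to the identity.

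The crux of the argument — and the step I expect to be the main obstacle — is showing $\Phi$ preserves the inversion number, $n_{inv}(\Phi(\tau)) = n_{inv}(\tau)$. Here I would lean on the ``height order'' reformulation of the inversion conditions promised in Section~\ref{sec: basic results}, which should let me phrase ``$(i,j)$ is an inversion'' purely in terms of which of the two boxes sits higher after column-sorting. The key observation is that an inversion pair $(i, j)$ is determined by a pair of boxes in a common column together with the data of the infinite rows of entries extending to their right; changing the values of some free $k$'s to $k{+}1$'s and vice versa never changes the comparison $i_\ell$ versus $j_\ell$ unless one of those compared entries is itself a swapped $k/k{+}1$, and a careful case analysis on columns should show that inversions involving the swapped boxes are matched bijectively before and after. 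Concretely, I would argue that within a single column the partition of $\{k, k{+}1\}$-boxes into locked pairs and a free run means the free $k$'s and free $k{+}1$'s interact with the rest of the column (and with each other) in a way that is symmetric under swapping their multiplicities — the set of inversions they participate in depends only on which boxes are ``$\le k$-ish'' versus ``$\ge k{+}1$-ish'' relative to the fixed boxes above and below, and this coarse data is exactly what $\Phi$ leaves invariant. I would also separately handle inversions among pairs where \emph{neither} entry is a swapped box (these are literally unchanged) and pairs where \emph{both} are (a short direct check), so that only the mixed case requires the symmetry argument above. Once inversion-preservation is established, $\Phi$ restricts to a bijection $S_i(\lambda,\mu) \to S_i(\lambda,\mu')$ for every $i$, and composing such maps along a word for $\sigma$ in adjacent transpositions gives the theorem.
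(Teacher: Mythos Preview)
Your reduction to adjacent transpositions matches the paper, but the Bender--Knuth map you describe is not well-defined on row-standard tableaux and does not preserve the inversion number. Because columns of a row-standard tableau carry no ordering, the free $k$'s and $k{+}1$'s in a column need not form a contiguous vertical run: in the single column $(k{+}1,k,k{+}1,k,k)$ your locking rule locks rows $2$ and $3$ and leaves free entries in rows $1,4,5$, so ``replace the string by the one with the counts interchanged'' does not specify which box changes value. Worse, even where your rule is unambiguous it fails the key claim $n_{inv}(\Phi(\tau))=n_{inv}(\tau)$: the single column $(k,k{+}1,k)$ has exactly one inversion, your rule locks rows $1,2$ and turns the lone free $k$ in row $3$ into $k{+}1$, and the result $(k,k{+}1,k{+}1)$ has zero inversions. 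The correct inversion-preserving image is $(k{+}1,k,k{+}1)$. Your closing symmetry heuristic (``depends only on which boxes are $\le k$-ish versus $\ge k{+}1$-ish'') is a statement about multisets of inversion counts, not about a pointwise bijection, so it cannot repair this.

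The paper's construction avoids both problems by working through the standardization. It locates the $a,a{+}1$ region inside $st(\tau)$, where columns \emph{are} ordered, and decomposes that skew shape into one- or two-column ``blocks''; on the one-column tail of each block it applies the map $\tau_-\mapsto \bar{\tau}_-^{\,*}$ (flip $a\leftrightarrow a{+}1$, then reverse vertically), which Lemmas~\ref{thm: permutation invariance lemma 1}--\ref{thm: permutation invariance lemma 3} show preserves the inversion count of the block. Crucially, whenever this changes the height order within a block, the paper also permutes the rows in every column \emph{to the left} of that block so as to restore the original height order there; without this adjustment the block operation would create or destroy inversions among entries not equal to $a$ or $a{+}1$. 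Neither ingredient---passing to $st(\tau)$ to get a well-posed block structure, nor the leftward row permutation---appears in your sketch, and both are essential.
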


Section \ref{sec: enumeration} proceeds to give a series of direct enumerative results about inverted semistandard Young tableaux.  Closed formulas are given for the number of $i$-inverted tableaux in the one-column case (Theorem \ref{thm: enumeration one column}) and two-row case (Theorem \ref{thm: enumeration two row}), for any valid content $\mu$.  Our result for the one-column case, which reveals an intriguing new application of the q-factorial, is shown below:

\begin{theorem}
\label{thm: intro theorem 3}
Let $\lambda$ be the one-column tableau shape with $M$ total entries, and let $\mu = 1^{\mu_1} 2^{\mu_2} \hdots m^{\mu_m}$ be any content with $\sum_k \mu_k = M$.  Then the $\vert S_i(\lambda,\mu) \vert$ have generating function:
\begin{center}
$\displaystyle{\sum_{i=0}^\infty \vert S_i(\lambda,\mu) \vert q^i \ = \ \frac{[M]_q !}{[\mu_1]_q ! [\mu_2]_q ! \hdots [\mu_m]_q !}}$
\end{center}
Where $[p]_q = 1+q+\hdots+q^{p-1}$ is the q-number and $[p]_q! = [p]_q [p-1]_q \hdots [2]_q [1]_q$ is the q-factorial.
\end{theorem}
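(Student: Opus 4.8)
The plan is to reduce the one-column case to the classical enumeration of rearrangements of a multiset by inversion number, and then to invoke MacMahon's evaluation of that generating function as a Gaussian multinomial coefficient. To set this up I would first unwind the definitions in the one-column setting. When $\lambda$ is a single column, every row of a tableau of shape $\lambda$ consists of a single box, so the row-standard condition is automatically satisfied; thus $I(\lambda,\mu)$ is exactly the set of all fillings of the column by the multiset $\{1^{\mu_1}, 2^{\mu_2}, \ldots, m^{\mu_m}\}$, and this set is in natural bijection with the set $\mathcal{R}(\mu)$ of words $w = w_1 w_2 \cdots w_M$ rearranging that multiset, read from top to bottom. Since no box has a box directly to its right, an inversion can only arise via condition (1): the box carrying the strictly smaller entry lies below the box carrying the larger entry. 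Transporting this along the bijection, $n_{inv}(\tau)$ equals the number of pairs of positions $k < l$ with $w_k > w_l$, which is precisely the classical inversion number $\mathrm{inv}(w)$. (Here it is important that an inversion pair is counted once per pair of \emph{boxes}, not once per pair of distinct \emph{values}; checking the column $2,1,1$, whose column-sorting requires two adjacent transpositions, confirms this is the intended reading and that repeated entries behave correctly.) Consequently $|S_i(\lambda,\mu)|$ is the number of words in $\mathcal{R}(\mu)$ with inversion number $i$, and $\sum_{i \ge 0} |S_i(\lambda,\mu)| q^i = \sum_{w \in \mathcal{R}(\mu)} q^{\mathrm{inv}(w)}$.

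It then remains to establish $\sum_{w \in \mathcal{R}(\mu)} q^{\mathrm{inv}(w)} = [M]_q!/([\mu_1]_q! [\mu_2]_q! \cdots [\mu_m]_q!)$, which is MacMahon's classical formula. If a self-contained proof is desired, I would induct on the number $m$ of distinct letters: the base case $m = 1$ is immediate, with a single word, no inversions, and right-hand side $1$. For the inductive step, deleting the $\mu_m$ copies of the largest letter $m$ from $w$ yields a word $w' \in \mathcal{R}(1^{\mu_1} \cdots (m-1)^{\mu_{m-1}})$; since every copy of $m$ exceeds every other letter, no inversion of $w$ involves two copies of $m$, and $\mathrm{inv}(w) = \mathrm{inv}(w') + c(w)$, where $c(w)$ counts the non-$m$ letters lying to the right of some copy of $m$ in $w$. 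Summing $q^{c(w)}$ over the $\binom{M}{\mu_m}$ ways of reinserting $\mu_m$ copies of $m$ into a fixed $w'$ of length $M - \mu_m$ is the two-letter instance of the statement, and equals the Gaussian binomial $\binom{M}{\mu_m}_q = [M]_q!/([\mu_m]_q! [M-\mu_m]_q!)$, the exponent being the area statistic of the associated lattice path. This factors the generating function as $\binom{M}{\mu_m}_q \cdot \sum_{w' \in \mathcal{R}(\mu')} q^{\mathrm{inv}(w')}$, where $\mu' = 1^{\mu_1} \cdots (m-1)^{\mu_{m-1}}$, and the inductive hypothesis completes the computation.

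I do not anticipate a serious obstacle. The only part of the argument that is genuinely about inverted semistandard tableaux is the reduction in the first paragraph, whose one subtlety is the box-versus-value bookkeeping noted there; the combinatorial heart of the identity --- the Gaussian-binomial / MacMahon count of multiset inversions --- is entirely classical (see, for instance, Stanley's \emph{Enumerative Combinatorics} or Andrews' \emph{The Theory of Partitions}), so the main decision is simply how much of that standard material to reproduce versus cite.
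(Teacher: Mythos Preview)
Your proposal is correct and follows essentially the same route as the paper: both arguments insert the copies of each value $n$ into the word formed by the smaller values, observe that this step contributes a factor of $\binom{\mu_1+\cdots+\mu_n}{\mu_n}_q$ via the partition/lattice-path interpretation of the Gaussian binomial, and then take the product over $n$. The only difference is presentational: you first explicitly identify one-column tableau inversions with classical multiset-word inversions and name the resulting identity as MacMahon's formula, whereas the paper carries out the identical induction directly in tableau language without invoking the classical antecedent.
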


We close the paper with proofs of Theorems \ref{thm: enumeration i=1 rectangular} and \ref{thm: enumeration i=1 general}, which directly generalize results from \cite{BD} by demonstrating a bijection between $1$-inverted semistandard Young tableaux of a given shape of $0$-inverted semistandard Young tableaux of a collection of related shapes.  The more easily-digested specialization of this result to rectangular shapes is given below:

\begin{theorem}
\label{thm: intro theorem 4}
Let $m,n \geq 1$, and take the $m$-row tableau shapes $\lambda = (n,\hdots,n)$, $\widetilde{\lambda} = (n+1,n,\hdots,n,n-1)$.  Then $\vert S_1(\lambda,\mu) \vert = \vert S_0(\lambda,\mu) \vert$ for any content $\mu$ compatible with $\lambda$.
\end{theorem}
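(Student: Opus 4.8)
We prove that $|S_1(\lambda,\mu)|=|S_0(\widetilde\lambda,\mu)|$, where $\widetilde\lambda$ is the modified shape defined in the statement, so that $\widetilde\lambda$ arises from $\lambda=(n,\hdots,n)$ by deleting the corner box $(m,n)$ and adjoining the box $(1,n+1)$. The plan is an explicit content-preserving bijection generalizing the argument of \cite{BD} for the case $\mu=1^N$; it has a structural half and a slide half.

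The first step is a structural lemma. Using the height-order description of inversions from Section~\ref{sec: basic results}, I would show that the members of $S_1(\lambda,\mu)$ are precisely the tableaux $\tau$ obtained from a semistandard tableau $T=st(\tau)\in S_0(\lambda,\mu)$ by choosing indices $r$ ($1\le r\le m-1$) and $c$ ($1\le c\le n$) with $T(r,c)<T(r+1,c)$, and with $T(r+1,c)<T(r,c+1)$ whenever $c<n$, and then interchanging the entries of rows $r$ and $r+1$ in every one of the columns $1,2,\hdots,c$. The point is that the rightward-cascading clauses (2) and (4) of the definition of an inversion force any single inversion to arise from such a ``prefix row-swap'': two disjoint swapped runs, or a run not anchored at column~$1$, each spawn a second inversion, whereas the two displayed inequalities are exactly what pin the lone resulting inversion to column~$c$. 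This is the step at which repeated entries genuinely depart from the standard case of \cite{BD}, since with repeats a column of $\tau$ can be weakly increasing and yet host an inversion. The outcome is a bijection $\tau\leftrightarrow(T,r,c)$ between $S_1(\lambda,\mu)$ and the set of admissible triples, with $(r,c)$ read off as the location of the unique inversion of $\tau$.

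The second step matches admissible triples with semistandard tableaux of shape $\widetilde\lambda$. I would define $\Phi(T,r,c)$ by a de-inversion slide: delete the larger inversion entry $T(r+1,c)$ from its cell, run a jeu-de-taquin-type slide of the resulting hole --- the admissibility inequalities are what make the relevant comparisons go the right way --- until it leaves the diagram through the unique outer corner $(m,n)$, reducing the shape to $\lambda\setminus\{(m,n)\}$, and reinstate the deleted entry in the new hand cell $(1,n+1)$. The inverse $\Psi$ runs the mirror-image reverse slide on an $S\in S_0(\widetilde\lambda,\mu)$: delete the entry of the hand cell $(1,n+1)$, reverse-slide the hole up and to the left until it recreates the corner cell $(m,n)$, and place the deleted entry one row higher, thereby introducing a single inversion. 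I would then check the three things that make this a bijection: $\Phi$ lands in $S_0(\widetilde\lambda,\mu)$ (row-strict, columns weakly increasing, content $\mu$ preserved); $\Psi$ lands in $S_1(\lambda,\mu)$, the key point being $n_{inv}(\Psi(S))=1$ exactly; and $\Phi,\Psi$ are mutually inverse. The degenerate configurations are dispatched separately --- $n=1$ (row~$m$ of $\widetilde\lambda$ collapses; here the claim can be cross-checked against Theorem~\ref{thm: enumeration one column}), $m=1$, and contents using at most $n$ distinct entries (both sides then empty).

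The real obstacle, I expect, is the structural lemma together with the inversion-count bookkeeping in the slide: proving that, in the presence of repeated entries, one inversion rigidly forces the prefix row-swap form, and that the de-inversion slide destroys exactly one inversion while creating none. I anticipate the height-order formalism of Section~\ref{sec: basic results} --- which linearly orders the cells of each column in a way insensitive to ties among equal entries --- is precisely the device that makes both arguments go through, and permutation-invariance (Theorem~\ref{thm: permutation invariance, specific i}) can be used beforehand to normalize $\mu$ if convenient.
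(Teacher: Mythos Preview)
Your structural lemma (the prefix row-swap characterization of $S_1(\lambda,\mu)$) is plausible and appears to be correct, but the second half of your bijection is broken. The map $\Phi$ as you describe it --- delete $b=T(r+1,c)$, slide the hole out to $(m,n)$, and then place $b$ in the new cell $(1,n+1)$ --- does not in general produce a row-standard filling, because nothing forces $b>T(1,n)$. Concretely, take $m=n=3$,
\[
T=\begin{pmatrix}1&2&5\\2&4&6\\3&4&7\end{pmatrix}\in S_0(\lambda,\mu),\qquad (r,c)=(2,1).
\]
The admissibility conditions hold ($2<3$ and $3<4$), and the corresponding $\tau\in S_1(\lambda,\mu)$ has its unique inversion $(2,3)$ in column $1$. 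Your $\Phi$ deletes $b=3$ from cell $(3,1)$, slides the hole along row $3$ to $(3,3)$, and then places $3$ at $(1,4)$; the first row becomes $1,2,5,3$, which is not strictly increasing. So $\Phi$ does not land in $S_0(\widetilde\lambda,\mu)$, and the argument collapses at exactly the point you flagged as the ``real obstacle.'' The underlying reason is that a single jeu-de-taquin slide moves the \emph{hole} but does nothing to enlarge the value of the entry you intend to reinsert; there is no mechanism tying $b$ to the top-right corner of the tableau.

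The paper's proof avoids this by a genuinely different device: rather than sliding on $st(\tau)$, it performs an iterated \emph{bumping} procedure on $\tau$ itself, producing a strictly increasing chain $c_k=b<c_{k+1}<\cdots<c_n$ (one element per column from the inversion column onward), together with local slides to repair the vacated cells. It is $c_n$, not $b$, that is placed at $(1,n+1)$, and the chain inequalities are precisely what guarantee row-strictness there. Your structural lemma could still be a useful first step, but the second step needs a column-by-column insertion that updates the ``carried'' entry, as in the paper, rather than a single slide followed by reinsertion of the original $b$.
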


\section{Basic Results About Inverted Semistandard Young Tableaux}
\label{sec: basic results}

Before moving on to specific results about inverted semistandard Young tableaux, we formalize the definition of tableau inversion from Section \ref{sec: intro} via a complete order on the entries of any fixed column in a tableau.  So let $\tau$ be an inverted semistandard Young tableau.  Beginning with the rightmost column of $\tau$ and recursively working our way leftward, we place a complete order $\blacktriangleleft$ on the entries $\lbrace a_i \rbrace$ of each column as follows:

\begin{itemize}
\item If either $a_i$ or $a_j$ lacks an entry directly to its right and $a_i$ lies above $a_j$, then $a_i \blacktriangleleft a_j$.
\item If $a_i$ lies directly to the left of $b_i$, $a_j$ lies directly to the right of $b_j$, and $b_i < b_j$, then $a_i \blacktriangleleft a_j$.
\item If $a_i$ lies directly to the left of $b_i$, $a_j$ lies directly to the right of $b_j$, $b_i = b_j$, and $b_i \blacktriangleleft b_j$, then $a_i \blacktriangleleft a_j$.
\end{itemize}
We call $\blacktriangleleft$ the \textbf{height order} on the $j^{th}$ column of $\tau$.  If $c$ is the $k^{th}$ smallest element in its column of $\tau$ relative to the height order on that column, we say that $c$ has a \textbf{height} of $k$ in $\tau$ and write $ht(c)=k$.

The order $\blacktriangleleft$ tells us how a column of a tableau ``should be" ordered (relative to the column immediately on its right) if that column is to avoid any inversion pairs.  Notice that, if $\tau$ is column semistandard, then the height of $c$ is always equal to its row number.  In an inverted tableau, if $ht(c)$ does not equal the row number of $c$, then $c$ is involved in at least one in at least one inversion pair.  Most generally:

\begin{proposition}
\label{def: inversions from height order}
Let $\tau$ be a row-standard tableau and let $i,j$ be two entries from the same column of $\tau$.  Then $(i,j)$ forms an inversion pair of $\tau$ if and only if $i<j$ and $j \blacktriangleleft i$.
\end{proposition}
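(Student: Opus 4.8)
The plan is to unwind the recursive definition of the height order $\blacktriangleleft$ and check it term by term against the four conditions defining an inversion pair.

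First I would observe that conditions (1)--(4) merge into a single statement. Fix the column containing $i$ and $j$, and let $N \geq 0$ be the length of the longest common prefix of the sequences $(i_1, i_2, \dots)$ and $(j_1, j_2, \dots)$ of entries directly to the right of $i$ and of $j$ respectively (so $N = 0$ exactly when $i_1$ or $j_1$ is absent, or $i_1 \neq j_1$). Then conditions (1) and (3) together assert that one of $i_{N+1}, j_{N+1}$ fails to exist and $i$ lies below $j$, while conditions (2) and (4) together assert that both $i_{N+1}$ and $j_{N+1}$ exist and $i_{N+1} > j_{N+1}$; these two alternatives are mutually exclusive. Hence $(i,j)$ is an inversion pair if and only if $i < j$ and one of them holds, a reformulation that also makes transparent that (1) and (2) are just the $N = 0$ cases of (3) and (4).

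Next I would establish, by induction on the number of columns lying strictly to the right of a column $C$ --- equivalently, following the right-to-left recursion that defines $\blacktriangleleft$ --- the following characterization of the height order: for any two cells $a, b$ of $C$, with $(a_1, a_2, \dots)$ and $(b_1, b_2, \dots)$ their rightward entry sequences and $N$ the length of their longest common prefix, $a \blacktriangleleft b$ holds if and only if either one of $a_{N+1}, b_{N+1}$ is absent and $a$ lies above $b$, or both exist and $a_{N+1} < b_{N+1}$. In the base case $C$ is the last column, so every cell lacks a right neighbour, $N = 0$, and by the first bullet $\blacktriangleleft$ coincides with the ``lies above'' relation, matching the claim. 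For the inductive step: if at least one of $a, b$ has no right neighbour, only the first bullet applies, giving $a \blacktriangleleft b$ iff $a$ lies above $b$, with $N = 0$; if both have right neighbours $a_1, b_1$ with $a_1 \neq b_1$ as values, the second bullet gives $a \blacktriangleleft b$ iff $a_1 < b_1$, again with $N = 0$; and if $a_1 = b_1$ as values, the third bullet gives $a \blacktriangleleft b$ iff $a_1 \blacktriangleleft b_1$ in column $C+1$, to which the induction hypothesis applies. In that last case the longest common prefix of $(a_1, a_2, \dots)$ and $(b_1, b_2, \dots)$ has length $N = N' + 1$, where $N'$ is the corresponding length for $(a_2, a_3, \dots)$ and $(b_2, b_3, \dots)$, and $a_1$ lies above $b_1$ exactly when $a$ lies above $b$ (they occupy the same pair of rows); so the induction hypothesis applied to $(a_1, b_1)$ translates directly into the asserted statement for $(a, b)$. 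Verifying en route that the three bullets are mutually exclusive and jointly exhaustive shows that $\blacktriangleleft$ really is a well-defined total order.

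Finally I would apply this characterization to the pair $(a, b) = (j, i)$: since $N$ is symmetric in $i$ and $j$, since ``$j$ lies above $i$'' is the same as ``$i$ lies below $j$,'' and since ``$j_{N+1} < i_{N+1}$'' is the same as ``$i_{N+1} > j_{N+1}$,'' the condition $j \blacktriangleleft i$ unwinds to precisely the disjunction from the first paragraph. Combining, $(i,j)$ is an inversion pair if and only if $i < j$ and $j \blacktriangleleft i$. I expect the only real obstacle to be the index bookkeeping in the inductive step --- tracking that $N$ for $(a, b)$ exceeds $N'$ for $(a_1, b_1)$ by one and that $a_{N+1}$ coincides with the $(N'+1)$-th entry to the right of $a_1$ --- together with the routine check that the bullets defining $\blacktriangleleft$ never conflict or leave a pair of cells uncompared. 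There is no deeper idea here than recognizing $\blacktriangleleft$ as a lexicographic comparison of rightward entry sequences with ``lies above'' as the tie-breaking rule.
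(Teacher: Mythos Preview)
Your proof is correct. The paper itself offers no proof of this proposition --- note the label \texttt{def:} and the surrounding text, which treats the statement as an immediate reformulation of the definitions --- so there is nothing to compare your approach against. Your argument supplies exactly the details the paper elides: the inductive unwinding of the three bullets defining $\blacktriangleleft$ into a lexicographic comparison of rightward entry sequences with ``lies above'' as tiebreaker, and the observation that this is precisely what the four-clause inversion definition encodes (with clauses (1)--(2) as the $N=0$ instances of (3)--(4)). The bookkeeping in your inductive step is handled correctly, including the case analysis on whether one, both, or neither of $a_{N+1}, b_{N+1}$ exists.
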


As one final basic comment about inversion pairs notice that, unlike in the non-repeated entry case of \cite{BD} and \cite{Fresse}, the location of an inversion pair $(i,j)$ is not uniquely identified by specifying which two entries are involved.  This is because, when one allows for repeated entries, it is possible for a pair of entries to appear together in more than one column of an inverted tableau.  When one needs to specify the column of origin for an inversion pair, we henceforth use $(i,j)^k$ to denote that the inversion pair $(i,j)$ occurs in the $k^{th}$ column of $\tau$.  Much as a standardization $T$ and a collection of inversion pairs was enough to uniquely identify a particular inverted standard Young tableau $\tau$ in the setting of \cite{Fresse} of \cite{BD}, it is straightforward to show that a standardization along with a collection of column-specified inversion pairs is enough to uniquely identify a particular inverted semistandard Young tableau.  However, this formal fact will not be necessary for anything that follows.

For the remainder of this section, we consider which basic results about inverted standard Young tableau from \cite{BD} generalize to the semistandard case.  The most fundamental result discussed in \cite{BD} was an explicit formula for the total number of inverted Young tableaux $\vert I(\lambda) \vert$ of an arbitrary shape $\lambda = (\lambda_1,\lambda_2,\hdots, \lambda_m)$.  A quick counting argument yielded:

\begin{equation}
\label{eq: total inverted tableaux, standard case}
\vert I(\lambda) \vert = \binom{\lambda_1 + \lambda_2 + \hdots + \lambda_m}{\lambda_m} \binom{\lambda_1 + \lambda_2 + \hdots + \lambda_{m-1}}{\lambda_{m-1}} \hdots \binom{\lambda_1 + \lambda_2}{\lambda_2} = \frac{(\lambda_1 + \lambda_2 + \hdots + \lambda_m)!}{\lambda_1 ! \lambda_2 ! \hdots \lambda_m !}
\end{equation}

Unfortunately, Equation \ref{eq: total inverted tableaux, standard case} does not appear to possess a tractable generalization to the general semistandard case $\vert I(\lambda,\mu) \vert$.  In particular, the necessity of the row-standard condition with regard to repeated entries prompts a series of increasingly sophisticated sub-cases and prevents a succinct probabilistic formulation akin to the rightmost side of Equation \ref{eq: total inverted tableaux, standard case}.  One of the few specific cases where $\vert I(\lambda,\mu) \vert$ is directly calculable with our current resources is when $\lambda$ has one column:

\begin{proposition}
\label{thm: total inversions, one column}
Let $\lambda = 1^M$ be the one-column tableau shape with $M$ total entries, and let $\mu = 1^{\mu_1} 2^{\mu_2} \hdots m^{\mu_m}$ be some content such that $\sum_k \mu_k = M$.  Then $\vert I(\lambda,\mu) \vert = \displaystyle{\frac{M!}{\mu_1 ! \mu_2 ! \hdots \mu_m !}}$.
\end{proposition}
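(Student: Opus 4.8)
The plan is to reduce the count to an elementary multiset‑arrangement count by observing that, for a one‑column shape, the row‑standard condition imposes no restriction whatsoever. First I would note that in a tableau of shape $\lambda = 1^M$ every row consists of a single box, so the requirement that entries strictly increase from left to right across each row is vacuously satisfied by \emph{any} assignment of entries to the $M$ boxes. Consequently a row‑standard tableau of shape $1^M$ and content $\mu$ is nothing more than a choice of top‑to‑bottom ordering of the multiset consisting of $\mu_1$ copies of $1$, $\mu_2$ copies of $2$, \ldots, $\mu_m$ copies of $m$.

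Next I would verify that every such row‑standard tableau actually belongs to $I(\lambda,\mu)$, i.e.\ that its standardization is a genuine semistandard Young tableau. By the discussion preceding the proposition, $st(\tau)$ is obtained by independently reordering the entries in each column so that they weakly increase from top to bottom; for a single column this always produces a weakly increasing column, and the (vacuous) row condition is automatically preserved, so $st(\tau)$ is semistandard for any filling $\tau$. Hence $I(\lambda,\mu) = \bigcup_{i} S_i(\lambda,\mu)$ coincides exactly with the set of all fillings of the column by the prescribed multiset of entries.

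Finally I would count these fillings by choosing which of the $M$ positions receive a $1$, then which of the remaining ones receive a $2$, and so on:
\[
\binom{M}{\mu_1}\binom{M-\mu_1}{\mu_2}\cdots\binom{\mu_m}{\mu_m} \;=\; \frac{M!}{\mu_1!\,\mu_2!\cdots\mu_m!},
\]
which is the claimed value of $\vert I(\lambda,\mu) \vert$.

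I expect no substantive obstacle here: the entire content of the statement is the observation that one‑column row‑standard tableaux are unconstrained, after which the result is just the standard multinomial count of anagrams. The only point requiring a line of care is confirming that each filling legitimately has a semistandard standardization, and that is immediate from the general setup of this section.
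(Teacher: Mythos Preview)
Your proposal is correct and follows essentially the same approach as the paper: both arguments observe that a one-column row-standard filling is simply an arbitrary arrangement of the given multiset of entries and then invoke the standard multinomial count $M!/(\mu_1!\cdots\mu_m!)$. Your write-up is slightly more careful in making explicit why the row-standard condition is vacuous and why every such filling lies in $I(\lambda,\mu)$, but this is only additional detail on the same idea.
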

\begin{proof}
Temporarily assume that all of the entries are distinct.  In this case there are $M!$ possible arrangements.  Dividing through by $\mu_i!$ then accounts for the fact that the $\mu_i$ instances of $i$ are indistinguishable, thus accounting for repetitions in our original enumeration.
\end{proof}

Luckily, the remaining results from Chapter 2 of \cite{BD} all admit generalizations to semistandard tableaux.  In Subsection \ref{subsec: max inversion number} we prove a general formula for the ``maximum inversion number" of an element in $I(\lambda,\mu)$.  In Subsection \ref{subsec: permutation invariance} we then prove an extremely useful result about the invariance of the $\vert S_i(\lambda,\mu) \vert$ under permutation of content: a theorem that has no analog in \cite{BD} but which directly generalizes the classic permutation invariance of (non-inverted) semistandard Young tableaux.  All enumerative results, including the ``straightforward cases" of one-column and two-row tableaux, are delayed until Section \ref{sec: enumeration} so that they can make direct usage of the permutation invariance guaranteed by Theorem \ref{thm: permutation invariance, specific i}.

\subsection{Maximum Number of Inversions for Shape $\lambda$ and Content $\mu$}
\label{subsec: max inversion number}

Obviously, a tableau of finite size cannot possess an infinite number of inversions.  It is then of interest to ask the maximum number of inversion pairs that an element of $I(\lambda,\mu)$ may possess.  In other words, what is the largest $i$ for which $\vert S_i(\lambda,\mu) \vert$ is nonempty?  For an inverted tableau without repeated entires, in \cite{BD} it was shown that the maximum such $i$ for an element of $I(\lambda)$ was:

\begin{equation}
\label{eq: maximum inversion number, standard case}
M_\lambda = \sum_j T_{(h_j - 1)} = \sum_j \binom{h_j}{2}
\end{equation} 

\noindent where $T_k = 1 + 2 + \hdots + k$ is the triangle number and $h_j$ is the height of the $j^{th}$ column in any tableau of shape $\lambda$.  In addition to an explicit formula for $M_\lambda$, \cite{BD} also showed that there was always precisely one element of $I(\lambda)$.  When one allows for general content $\mu$ with repeated entries, Equation \ref{eq: maximum inversion number, standard case} directly generalizes to the following:  

\begin{theorem}
\label{thm: maximum inversion number}
Let $\lambda = (\lambda_1,\hdots,\lambda_m)$ and $\mu = 1^{\mu_1} 2^{\mu_2} \hdots K ^{\mu_{K}}$, and define $h_j = \vert \lbrace \lambda_i \ \vert \ \lambda_i \geq j \rbrace \vert$ to be the height of the j\textsuperscript{th} column for any tableau of shape $\lambda$.  If $I(\lambda,\mu)$ is nonempty, then the maximum number of inversions for any inverted semistandard Young tableau of shape $\lambda$ with content $\mu$ is:

\begin{center}
$\displaystyle{M_{\lambda, \mu} = \sum_j T_{(h_j-1)} - \sum_i T_{(\mu_i -1)}} = \sum_j \binom{h_j}{2} - \sum_i \binom {\mu_i}{2}$
\end{center}

\noindent Moreover, this maximum inversion number is realized by precisely one inverted semistandard Young tableau of shape $\lambda$ and content $\mu$, so that $\vert S_{M_{\lambda,\mu}}(\lambda,\mu) \vert = 1$.

\end{theorem}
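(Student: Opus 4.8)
The argument splits into an upper bound $n_{inv}(\tau)\le M_{\lambda,\mu}$ valid for every $\tau\in I(\lambda,\mu)$, together with the construction of a unique extremal tableau, and I would run both by induction on the number of columns $\lambda_1$, peeling off the leftmost column. Write $\tau=(\mathrm{col}_1\mid\tau')$, where $\tau'$ is $\tau$ restricted to columns $2,\dots,\lambda_1$; it is row-standard, has shape $\lambda'$ (the columns of $\lambda$ after the first), and has content $\mu'$ with $\mu'_v=\mu_v-n_{v,1}$, where $n_{v,1}$ denotes the number of copies of the entry $v$ in column $1$. Since the height order on a column depends only on the columns strictly to its right, the height orders of columns $2,\dots,\lambda_1$ (hence their inversions) agree in $\tau$ and $\tau'$, so $n_{inv}(\tau)=e_1(\tau)+n_{inv}(\tau')$, where $e_1(\tau)$ counts inversions lying in column $1$.

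The heart of the upper bound is the estimate $e_1(\tau)\le\binom{h_1}{2}-\sum_v\binom{n_{v,1}}{2}-\sum_v n_{v,1}(\mu_v-n_{v,1})$; equivalently, column $1$ has at least $\sum_v\binom{n_{v,1}}{2}+\sum_v n_{v,1}(\mu_v-n_{v,1})$ non-inverted pairs of cells. The first sum just counts pairs of equal entries in column $1$, which are never inversions. For the second: if $v$ occupies cell $(r,1)$ and also some cell $(r',c)$ with $c>1$, then row-standardness forces the entry of $(r',2)$ to be $\le v$ while the entry of $(r,2)$, if it exists, is $>v$; feeding this into the defining bullets of the height order (the second bullet, or the first bullet in the degenerate case $\lambda_r=1$, whence $r'<r$) gives $(r',1)\blacktriangleleft(r,1)$, so by Proposition~\ref{def: inversions from height order} the pair $\{(r,1),(r',1)\}$ is not an inversion. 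Ranging over the $n_{v,1}$ choices of $(r,1)$ and the $\mu_v-n_{v,1}$ choices of $r'$ yields $n_{v,1}(\mu_v-n_{v,1})$ pairwise distinct non-inversions for each $v$, all of distinct-entry type and so disjoint from the equal-entry pairs. Combining this estimate with the inductive bound $n_{inv}(\tau')\le M_{\lambda',\mu'}$ and the identity $\binom{a}{2}+ab+\binom{b}{2}=\binom{a+b}{2}$ (with $a=n_{v,1}$, $b=\mu_v-n_{v,1}$), the $n_{v,1}$-dependent terms telescope and $n_{inv}(\tau)\le\sum_j\binom{h_j}{2}-\sum_v\binom{\mu_v}{2}=M_{\lambda,\mu}$; the base case $\lambda_1=1$ is immediate, the single column being optimally weakly decreasing.

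For existence and uniqueness I would analyze the equality case of this chain. Equality forces, inductively, that $\tau'$ is \emph{the} extremal tableau for $(\lambda',\mu')$ and that column $1$ realizes exactly the non-inversions identified above and no others; the claim is that this determines $\tau$ completely. I would show the content of column $1$ must be the ``greedy'' multiset $n_{v,1}=\min\!\big(\mu_v,\;h_1-\sum_{u<v}n_{u,1}\big)$, which coincides with the first column of the column-superstandard semistandard tableau $T^{cs}(\lambda,\mu)$; and that, once $\tau'$ is fixed, the only filling of column $1$ attaining $e_1=\binom{h_1}{2}-\sum_v\binom{n_{v,1}}{2}-\sum_v n_{v,1}(\mu_v-n_{v,1})$ is obtained by sweeping its cells in increasing height order and giving each the largest still-available entry of that multiset which is strictly below the entry to its right (no constraint for a cell with no right neighbor). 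Checking that this filling is legal, attains the bound, and has a semistandard standardization produces an element of $S_{M_{\lambda,\mu}}(\lambda,\mu)$; since every step was forced, it is the only one, so $\vert S_{M_{\lambda,\mu}}(\lambda,\mu)\vert=1$.

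The main obstacle is this determinacy claim: one must verify that any deviation — a different multiset in column $1$, or a smaller entry placed where a larger one could fit — strictly loses at least one inversion somewhere, and that the greedy choices are always feasible. Feasibility is precisely where the hypothesis $I(\lambda,\mu)\neq\emptyset$ enters, via the fact that $T^{cs}(\lambda,\mu)$ is semistandard iff $S(\lambda,\mu)\neq\emptyset$, which for non-partition content $\mu$ deserves a short separate verification. The upper-bound induction, by contrast, is entirely mechanical once the column-$1$ non-inversion lemma is in hand.
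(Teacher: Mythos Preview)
Your upper-bound argument is correct and is genuinely different from the paper's. The paper constructs $\tau_{\max}$ explicitly (right-to-left, largest entries first), then computes $n_{inv}(\tau_{\max})$ by comparing it against the standard-content maximizer $\widetilde{\tau}_{\max}$ whose inversion count $\sum_j\binom{h_j}{2}$ is imported from \cite{BD}; a case analysis on whether the copies of each value $i$ span one or two columns shows the deficit is exactly $\sum_i\binom{\mu_i}{2}$. Your induction on columns with the ``forced non-inversion'' lemma for column~1 is cleaner and self-contained: it does not appeal to the standard case at all, and the telescoping via $\binom{a}{2}+ab+\binom{b}{2}=\binom{a+b}{2}$ is a nice mechanism the paper does not use. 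Your verification that the $n_{v,1}(\mu_v-n_{v,1})$ cross-column non-inversions are pairwise distinct and disjoint from the equal-entry pairs is sound (the key point being that the two cells of such a pair carry \emph{different} values, since the entry at $(r',1)$ is strictly below $v$).

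Where your proposal is thinner is exactly where you flag it: uniqueness. Because your bound telescopes to the same value regardless of the multiset $\{n_{v,1}\}$, equality in the \emph{sum} does not by itself force the greedy column-1 content; you must argue that equality in part~(a) --- i.e.\ that the only non-inversions in column~1 are the ones you identified --- is incompatible with a non-greedy choice once $\tau'$ is pinned down by induction. This is true (a non-greedy column~1 places a value $v$ to the left of some smaller value already in column~2 of the extremal $\tau'$, which forces an extra non-inversion in column~1 via your own lemma), but it needs to be said; the paper handles this with a one-line appeal to the explicit $\tau_{\max}$ (``any $\tau$ whose column contents differ from $\tau_{\max}$ loses inversions''), which is equally sketchy but has the advantage of a concrete witness already in hand. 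Your feasibility remark about $T^{cs}(\lambda,\mu)$ is the right hook for existence, and the reduction to partition content via Theorem~\ref{thm: permutation invariance, specific i} would close that gap.
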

\begin{proof}
Our strategy is to construct an inverted tableau $\tau_{max}$ with precisely $\sum_j T_{h_j -1} - \sum_i T_{\mu_i - 1}$ inversion pairs and then argue why no other inverted tableau with the given $\lambda,\mu$ can have as many inversions than $\tau_{max}$.  To construct $\tau_{max}$ we work one column at a time, from right-to-left.  For the rightmost ($n^{th}$) column, we place the $h_n$ largest entries from top-to-bottom in the unique non-increasing order.  For the $(n-1)^{st}$ column, we work through the $h_{n-1}$ largest remaining remaining entries in decreasing order, placing each element in the available spot with the lowest height that does not violate the row-standard condition.  Notice that, if the instances of a repeated entry are split across two columns, this means that an entry need not be placed in the available spot with the lowest height.  Repeat this procedure for each of the remaining columns of $\tau_{max}$, placing the largest remaining entry in the lowest height slot available that does not result in identical entries being placed in the same row.  As we are recursively placing smaller entries leftward, the resulting tableau $\tau_{max}$ will always be row-standard.  For an example of the $\tau_{max}$ that results from this procedure, see Figure \ref{fig: maximum inversion number example}.

As seen in Equation \ref{eq: maximum inversion number, standard case}, if $\mu$ contains no repeated entries then $\tau_{max}$ has $M_{\lambda,\mu} = M_{\lambda} = \sum_j T_{h_j - 1}$ inversion pairs.  If $\mu$ has repeated entries, let $\widetilde{\tau}_{max}$ be the unique maximal inversion tableau of shape $\lambda$ with no repeated entries, as guaranteed by \cite{BD}.  Notice that re-indexing of repeated entries correlates instances of $i$ in $\tau_{max}$ to the set $\alpha_i = \lbrace \mu_1 + \hdots + \mu_{i-1} + 1, \hdots, \mu_1 + \hdots + \mu_{i-1} + \mu_i \rbrace$ in $\widetilde{\tau}_{max}$, although the placements of these sets of elements need not coincide because the ``preserving row-standard" condition in our construction of $\tau_{max}$ does not figure in the construction of $\widetilde{\tau}_{max}$.  See Figure \ref{fig: maximum inversion number example} for an example of the relationship between $\tau_{max}$ and $\widetilde{\tau}_{max}$.  Since we know $n_{inv}(\widetilde{\tau}_{max}) = \sum_j T_{h_j - 1}$, to show $n_{inv}(\tau_{max}) = \sum_j T_{h_j -1} - \sum_i T_{\mu_i - 1}$ we argue that $n_{inv}(\widetilde{\tau}_{max}) - n_{inv}(\tau_{max}) = \sum_i T_{\mu_i - 1}$.

\begin{figure}[h!]
\centering
\begin{ytableau}
1 & 4 & 5 \\
2 & 3 & 5 \\
2 & 3 \\
1 & 2
\end{ytableau}
\hspace{.5in}
\begin{ytableau}
2 & 7 & 10 \\
1 & 8 & 9 \\
3 & 6 \\
4 & 5
\end{ytableau}
\caption{The unique inverted semistandard Young tableau $\tau_{max}$ for $\lambda = (3,3,2,2)$, $\mu = 1^2 2^3 3^2 4^1 5^2$ with $M_{\lambda,\mu} = 7$ inversion pairs (left), and the related tableau $\widetilde{\tau}_{max}$ from the proof of Theorem \ref{thm: maximum inversion number} (right).}
\label{fig: maximum inversion number example}
\end{figure}

So fix an entry $i$ in $\tau_{max}$.  We compare inversion pairs in $\tau_{max}$ whose larger entry is $i$ with inversion pairs in $\widetilde{\tau}_{max}$ whose larger entry is an element of $\alpha_i$.  Notice that the instances of $i$ may or may not be split across two (adjacent) columns of $\tau_{max}$, but that the elements of $\alpha_i$ are split across two columns of $\widetilde{\tau}_{max}$ if and only if the instances of $i$ are split across two columns in $\tau_{max}$.  If instances of $i$ are split across two columns, we let $\zeta_1$ and $\zeta_2$ denote the subsets of those instances that lie in the leftward and rightward of the two columns, respectively.  Similarly, we let $\widetilde{\zeta}_1$ and $\widetilde{\zeta}_2$ denote the subsets of $\alpha_i$ that lie in the leftward and rightward of the two columns, respectively.  By construction, we always have $\vert \zeta_1 \vert = \vert \widetilde{\zeta}_1 \vert$ and $\vert \zeta_2 \vert = \vert \widetilde{\zeta}_2 \vert$.  In enumerating our inversion pairs, we consider two distinct cases:
\begin{enumerate}
\item If all instances of $i$ are in one column of $\tau_{max}$, by construction there exists a bijection between inversion pairs $(k,i)$ of $\tau_{max}$ with $k \neq i$ and inversion pairs $(\widetilde{k},\widetilde{i})$ of $\widetilde{\tau}_{max}$ with $\widetilde{i} \in \alpha_i$ and $\widetilde{k} \notin \alpha_i$.  In this case, $\widetilde{\tau}_{max}$ contains $T_{\mu_i}$ additional inversion pairs $(\widetilde{i_1},\widetilde{i_2})$ with $\widetilde{i_1},\widetilde{i_2} \in \alpha_i$ that have no analogue in $\tau_{max}$.  This follows from the fact that $(i,i)$ is not a valid inversion pair in $\tau_{max}$.
\item If instances of $i$ are split between two columns of $\tau_{max}$, let $\eta$ denote the set of entries in $\tau_{max}$ that lie directly to the left of elements of $\zeta_2$, and let $\widetilde{\eta}$ be the corresponding (reindexed) set of entries from $\widetilde{\tau}_{max}$.  By construction, every element of $\eta$ is less than $i$, and every element of $\widetilde{\eta}$ is less than every element of $\alpha_i$.  As $i$ is the smallest entry in the rightward of the two ``active" columns of $\tau_{max}$, $\tau_{max}$ cannot have an inversion pair of the form $(k,i)$ for any $k \in \eta$.  However, $\widetilde{\tau}_{max}$ will have an inversion pair of the form $(\widetilde{i_1},\widetilde{i_2})$ whenever $i_1 \in \widetilde{\eta}$ and $\widetilde{i_2} \in \widetilde{\zeta}_1$.  In this case, there exists a bijection between inversion pairs $(k,i)$ of $\tau_{max}$ with $k \neq i$ and inversion pairs $(\widetilde{k},\widetilde{i})$ with $\widetilde{i} \in \alpha_i$, $\widetilde{k} \notin \alpha_i \cup \widetilde{\eta}$.  Notice that there are precisely $\mu_i$ elements of $\zeta_1 \cup \eta$ in $\tau_{max}$, none of which may partake in any inversion pairs with other members of $\zeta_1 \cup \eta$, whereas $\widetilde{\zeta}_1 \cup \widetilde{\eta}$ is a ``fully-inverted" set in $\widetilde{\tau}_{max}$.  It follows that there exist precisely $T_{\mu_i}$ additional inversion pairs $(\widetilde{i_1},\widetilde{i_2})$ in $\widetilde{\tau}_{max}$ that have no analogue in $\tau_{max}$.  These are precisely the inversion pairs where $\widetilde{i_1},\widetilde{i_2} \in \widetilde{\zeta}_1 \cup \widetilde{\eta}$.
\end{enumerate}
In both cases, the number of inversion pairs in $\tau_{max}$ whose larger entry is $i$ is $T_{\mu_i}$ fewer than the number of inversion pairs in $\widetilde{\tau}_{max}$ whose larger entry is in $\alpha_i$.  Ranging over all distinct entries $i$ in $\tau_{max}$, we may conclude $n_{inv}(\widetilde{\tau}_{max}) - n_{inv}(\tau_{max}) = \sum_i T_{\mu_i - 1}$ and hence that $n_{inv}(\tau_{max}) = \sum_j T_{h_j -1} - \sum_i T_{\mu_i - 1}$.

It remains to be shown that no tableau in $I(\lambda,\mu)$ may have more inversions than $\tau_{max}$, as well as that $\tau_{max}$ is the unique element of $S_{M_{\lambda,\mu}}(\lambda,\mu)$.  First notice that $\tau_{max}$ has the property that every element in the $(j+1)^{st}$ column is at least as large as every element in the $j^{th}$ column, for all $j$.  If some other tableau $\tau \in I(\lambda,\mu)$ has entries $i_1 < i_2$ with some instance of $i_1$ in a column to the right of some instance of $i_2$, the number of inversions in $\tau$ whose larger entry is $i_2$ will be more than $T_{\mu_i}$ fewer than the number of inversion pairs in $\widetilde{\tau}_{max}$ whose larger entry is in $\alpha_{i_2}$.  It follows that we must have $n_{inv}(\tau) < n_{inv}(\tau_{max})$ for such a $\tau$.  This leaves $\tau \in I(\lambda,\mu)$ whose columns partition entries identically to $\tau_{max}$, but in which at least one of the columns has been ordered differently.  As $\tau_{max}$ was directly constructed to maximize the number of inversion pairs in each column (given the entries that must appear in that column), we clearly have $n_{inv}(\tau) < n_{inv}(\tau_{max})$ in this case.  It follows that $\tau_{max}$ is the unique element of $I(\lambda,\mu)$ with $M_{(\lambda,\mu)} = \sum_j T_{h_j -1} - \sum_i T_{\mu_i - 1}$ inversion pairs, and that no other element of $I(\lambda,\mu)$ may have more than $M_{\lambda,\mu}$ inversion pairs.
\end{proof}

\subsection{Invariance Under Permutation of Content}
\label{subsec: permutation invariance}

One of the most fundamental results involving semistandard Young tableaux is that the number of such tableaux with a fixed shape $\lambda$ is invariant under permutation of content.  In particular, given a content $\mu = (\mu_1,\mu_2,\hdots,\mu_M)$ and any permutation $\sigma \in S_M$, then $\vert S(\lambda,\mu) \vert = \vert S(\lambda,\sigma(\mu)) \vert$.  The most common proof of that fact, as outlined in \cite{Fulton}, identifies the number of semistandard tableaux of given content as the coefficient in a certain Schur polynomial and then utilizes the fact that Schur polynomials are symmetric polynomials.

In this Subsection we show that the ``permutation invariance" outlined above extends to i-inverted semistandard tableaux with a fixed number of inversions: that $\vert S_i(\lambda,\mu) \vert = \vert S_i(\lambda,\sigma(\mu)) \vert$ for all $i \geq 0$.  Since the traditional notion of semistandard Young tableau corresponds to the case of $i=0$, our general result specializes to the previously-established permutation invariance result of \cite{Fulton} when $i=0$.  Note that our techniques in no way reference symmetric polynomials, meaning that our $i=0$ specialization offers an apparently new proof of the well-known result from \cite{Fulton}.

Before proceeding to our primary proof, we require a series of technical lemmas characterizing how inversion numbers behave under manipulations of inverted semistandard Young tableaux with certain ``basic" shapes.

\begin{lemma}
\label{thm: permutation invariance lemma 1}
Let $\tau$ be a one-column row-standard tableau with $N$ total boxes and content $\mu = 1^j 2^{N-j}$.  If $\tau^*$ is the row-standard tableau of content $\mu$ obtained by reversing the vertical ordering of $\tau$, then $n_{inv}(\tau) + n_{inv}(\tau^*) = j(N-j)$.
\end{lemma}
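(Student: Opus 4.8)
The plan is to reduce the statement to a trivial count of position pairs. First I would observe that in a one-column tableau no cell has an entry directly to its right, so conditions (2), (3), (4) in the definition of inversion can never be invoked and only condition (1) is relevant; equivalently, by Proposition~\ref{def: inversions from height order} the height order $\blacktriangleleft$ on the single column is simply the top-to-bottom order. Since an inversion pair $(i,j)$ requires $i<j$ and the content is $\mu = 1^j 2^{N-j}$, the only candidate pairs are those made up of one cell containing a $1$ and one cell containing a $2$, and such a pair inverts precisely when the $1$ sits below the $2$. The one point needing a moment's care is exactly this collapse of the four-clause definition to the single clause "a $1$ lies below a $2$" — and, correspondingly, the fact that two equal entries never form an inversion pair.

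Next I would introduce coordinates. Writing the column of $\tau$ from top to bottom as $a_1,\dots,a_N\in\{1,2\}$, the previous paragraph gives
\[
n_{inv}(\tau) \;=\; \#\{(p,q) : 1 \le p < q \le N,\ a_p = 2,\ a_q = 1\}.
\]
Reversing the vertical order carries position $k$ to position $N+1-k$ and hence interchanges "above" and "below"; after the substitution $p \mapsto N+1-q$, $q \mapsto N+1-p$ (which preserves the condition $p<q$), this yields
\[
n_{inv}(\tau^*) \;=\; \#\{(p,q) : 1 \le p < q \le N,\ a_p = 1,\ a_q = 2\}.
\]

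Finally I would add the two counts. Every pair of positions $\{p,q\}$ with $p<q$ and $\{a_p,a_q\}=\{1,2\}$ contributes exactly once to the sum — to the first count if the cell holding the $2$ is the higher one, to the second if the cell holding the $1$ is — while a pair with $a_p=a_q$ contributes to neither. Therefore $n_{inv}(\tau)+n_{inv}(\tau^*)$ is exactly the number of unordered pairs consisting of one of the $j$ cells carrying a $1$ and one of the $N-j$ cells carrying a $2$, which is $j(N-j)$. There is no real obstacle in this argument; its only substance is the bookkeeping in the first paragraph.
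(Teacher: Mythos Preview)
Your proof is correct and follows essentially the same approach as the paper's: both arguments observe that in a single column an inversion is simply a $2$ sitting above a $1$, that reversing the column swaps ``above'' and ``below'' so that each mixed pair $\{1,2\}$ is an inversion in exactly one of $\tau$ and $\tau^*$, and that there are $j(N-j)$ such mixed pairs. Your version is slightly more explicit in introducing coordinates and performing the index substitution, but the content is the same.
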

\begin{proof}
Notice that the maximum possible number of inversions for a tableau with given $\lambda$ and $\mu$ is $j(N-j)$, occurring when all $2$ entries lie above all $1$ entries.  Now take any two entries $a_i,a_j$ in $\tau$ such that $a_i \neq a_j$.  The entries $a_i$ and $a_j$ form an inversion pair in $\tau$ if and only if their reflections $a_{N-i+1},a_{N-j+1}$ do not form an inversion pair in $\tau^*$.  It follows that any such pair $a_i,a_j$ constitutes an inversion pair in precisely one of $\tau$ or $\tau^*$.  Thus $n_{inv}(\tau) + n_{inv}(\tau^*) = j(N-j)$.
\end{proof}

\begin{lemma}
\label{thm: permutation invariance lemma 2}
Let $\tau$ be a one-column row-standard tableau with $N$ total boxes and content $\mu = 1^j 2^{N-j}$.  If $\bar{\tau}$ is the row-standard tableau of content $\bar{\mu} = 1^{N-j} 2^j$ obtained by flipping all instances of $1$ and $2$ in $\tau$, then $n_{inv}(\tau) + n_{inv}(\bar{\tau}) = j(N-j)$.
\end{lemma}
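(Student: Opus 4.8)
The plan is to mimic the complementary-counting argument used for Lemma \ref{thm: permutation invariance lemma 1}, after first translating inversion pairs into a statement about pairs of boxes. First I would invoke Proposition \ref{def: inversions from height order} in the one-column setting: since every box of a single-column tableau lacks an entry directly to its right, the height order $\blacktriangleleft$ on that column is simply the top-to-bottom order, so $(1,2)$ is an inversion pair precisely when a box holding $2$ sits above a box holding $1$. Because entries repeat, I would (exactly as in the proof of Lemma \ref{thm: permutation invariance lemma 1}) read $n_{inv}$ as a count over \emph{pairs of boxes}: $n_{inv}(\tau)$ equals the number of ordered (upper box, lower box) pairs in which the upper box holds $2$ and the lower holds $1$.

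Next I would analyze what the flip does. The flip leaves box positions fixed but exchanges the values $1$ and $2$, so a pair of boxes $(p,q)$ with $p$ above $q$ is an inversion of $\bar\tau$ exactly when $\bar\tau$ has $2$ in $p$ and $1$ in $q$, i.e. when $\tau$ has $1$ in $p$ and $2$ in $q$. Thus $n_{inv}(\bar\tau)$ counts the (upper, lower) box pairs in which the upper box holds $1$ and the lower holds $2$ in $\tau$.

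Then I would finish by counting. Among the $N$ boxes, $j$ hold $1$ and $N-j$ hold $2$, so there are exactly $j(N-j)$ unordered ``mixed'' pairs consisting of one $1$-box and one $2$-box, and in each such pair one box lies strictly above the other. If the $2$-box is the upper one the pair is counted by $n_{inv}(\tau)$ and by nothing in $\bar\tau$; if the $1$-box is the upper one it is counted by $n_{inv}(\bar\tau)$ and by nothing in $\tau$. Hence every mixed pair contributes exactly $1$ to $n_{inv}(\tau)+n_{inv}(\bar\tau)$, which gives the claimed value $j(N-j)$. Equivalently, the computation above shows $n_{inv}(\bar\tau)=n_{inv}(\tau^*)$ for the vertical reversal $\tau^*$ of Lemma \ref{thm: permutation invariance lemma 1}, so the identity also follows at once from that lemma.

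There is no real obstacle here: the statement is essentially a one-line bijective count. The only point demanding care is the bookkeeping around repeated entries — ensuring ``inversion pair'' is consistently interpreted as a pair of boxes rather than a pair of values, in the same spirit as the proof of Lemma \ref{thm: permutation invariance lemma 1} — and, secondarily, noting that the flip genuinely produces a row-standard tableau, which is automatic since a one-column array is vacuously row-standard.
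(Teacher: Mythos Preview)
Your proof is correct and follows essentially the same approach as the paper: both argue that each of the $j(N-j)$ mixed pairs of boxes (one holding $1$, one holding $2$) is an inversion pair in exactly one of $\tau$ or $\bar{\tau}$, since flipping the values reverses whether such a pair is inverted. Your write-up is more explicit about the box-pair interpretation and adds the nice observation that $n_{inv}(\bar{\tau}) = n_{inv}(\tau^*)$, but the underlying idea is identical.
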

\begin{proof}
Notice that the maximum possible number of inversions for a tableau with given $\lambda$ and either content $\mu$ or $\bar{\mu}$ is $j(N-j)$, once again occurring when all $2$ entries lie above all $1$ entries.  Take any two entries $a_i,a_j$ in $\tau$ such that $a_i \neq a_j$, and let $\bar{a}_i,\bar{a}_j$ be the equivalently placed entries in $\bar{\tau}$.  The entries $a_i$ and $a_j$ form an inversion pair in $\tau$ if and only if $\bar{a}_i$ and $\bar{a}_j$ do not form an inversion pair in $\bar{\tau}$, as the relative ordering of the entries has been inverted in $\bar{\tau}$.  It follows that $n_{inv}(\tau) + n_{inv}(\bar{\tau}) = j(N-j)$.
\end{proof}

\begin{lemma}
\label{thm: permutation invariance lemma 3}
Let $\lambda$ be a two-column tableau shape $\lambda$ with $N$ total boxes.  If we define contents $\mu = 1^j 2^{N-j}$ and $\bar{\mu} = 1^{N-j} 2^j$, where $0 \leq j \leq N$, then $\vert S_i(\lambda,\mu) \vert = \vert S_i(\lambda,\bar{\mu}) \vert$ for every $i \geq 0$.
\end{lemma}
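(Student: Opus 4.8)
The plan is to reduce the two-column case to a one-column computation governed by Lemmas~\ref{thm: permutation invariance lemma 1} and~\ref{thm: permutation invariance lemma 2}. First I would note that a two-column Young diagram has the form $\lambda = (2^k, 1^\ell)$, and that since the only entries permitted by content $\mu$ or $\bar\mu$ are $1$ and $2$, row-standardness forces each of the $k$ rows of length two to read $(1,2)$. Consequently the second column of any such tableau is constant, equal to $2$ throughout, so it contributes no inversion pairs and every inversion of $\tau$ occurs inside the first column. I would then unwind the height order $\blacktriangleleft$ on the first column using Proposition~\ref{def: inversions from height order}: the $k$ entries lying in length-two rows each carry a right-neighbour equal to $2$ and, because the rightmost column is constant, are ordered by $\blacktriangleleft$ according to their row index, while the $\ell$ entries in the single-box ``tail'' rows have no right-neighbour. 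Running through the three clauses defining $\blacktriangleleft$ then shows that the only inversion pairs are those consisting of a $2$ sitting above a $1$ with both entries in the tail. In particular $n_{inv}(\tau)$ equals the inversion number of the one-column row-standard tableau $C(\tau)$ obtained by reading the $\ell$ tail entries from top to bottom.

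Next I would package this as a bijection. Stacking $k$ copies of the row $(1,2)$ on top of an arbitrary one-column row-standard tableau with $\ell$ boxes always yields a row-standard tableau of shape $\lambda$, and this sets up a bijection $\tau \mapsto C(\tau)$ between $S_i(\lambda,\mu)$ and the set of one-column row-standard tableaux with $\ell$ boxes, content $1^{j-k} 2^{N-j-k}$, and exactly $i$ inversions; applying the same construction with content $\bar\mu$ gives one-column tableaux with $\ell$ boxes, content $1^{N-j-k} 2^{j-k}$, and exactly $i$ inversions. (When $I(\lambda,\mu)$ is empty both one-column sets are empty, each being nonempty only when $k \le j \le N-k$, and the claim is trivial; the degenerate case $\ell = 0$ forces $\mu = \bar\mu = 1^k 2^k$.) Thus it suffices to prove that, on a one-column shape with $\ell$ boxes, interchanging the multiplicities of $1$ and $2$ preserves each $|S_i|$.

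For that I would compose the two preceding lemmas. Write $p = j-k$ and $q = N-j-k$, so $pq$ is the common maximal inversion count in each one-column set. By Lemma~\ref{thm: permutation invariance lemma 1}, vertical reversal is a bijection on one-column tableaux of fixed content sending a tableau with $i$ inversions to one with $pq - i$ inversions; by Lemma~\ref{thm: permutation invariance lemma 2}, flipping all $1$'s and $2$'s is a bijection sending content $1^p 2^q$ to content $1^q 2^p$ and a tableau with $i$ inversions to one with $pq - i$ inversions. Composing the flip with the reversal therefore produces a bijection from one-column tableaux of content $1^p 2^q$ with $i$ inversions to one-column tableaux of content $1^q 2^p$ with $pq - (pq - i) = i$ inversions. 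Transporting this bijection back through $\tau \mapsto C(\tau)$ gives $|S_i(\lambda,\mu)| = |S_i(\lambda,\bar\mu)|$ for all $i \ge 0$.

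The only delicate point is the height-order bookkeeping in the first step: one must confirm that no inversion pair can involve an entry of the constant second column or a ``head'' entry of a length-two row, and that the tail really does behave as a free-standing one-column tableau. Once that is in hand the rest is purely formal, and in fact the argument shows that for two-column shapes Lemma~\ref{thm: permutation invariance lemma 3} is equivalent to the conjunction of Lemmas~\ref{thm: permutation invariance lemma 1} and~\ref{thm: permutation invariance lemma 2}.
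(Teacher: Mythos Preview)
Your argument is correct and follows the same strategy as the paper's proof: both isolate the one-column ``tail'' where all inversions live, then apply the composition of Lemmas~\ref{thm: permutation invariance lemma 1} and~\ref{thm: permutation invariance lemma 2} (reversal and entry-flip) to that tail to obtain an inversion-preserving, content-swapping bijection. Your height-order bookkeeping is spelled out more carefully than in the paper, but the underlying decomposition and the map $\tau_- \mapsto \bar{\tau}_-^{\,*}$ are identical.
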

\begin{proof}
The general form of such a row-standard tableau (with content $\mu$ or $\bar{\mu}$) is shown in Figure \ref{fig: two-column lemma form}.  No matter the number of inversions, the only portion of such a tableau that is not determined is the one-column ``tail".  Observe that any inversion pairs from such a tableau must occur in its ``tail".  For any $\tau \in I(\lambda,\mu)$, we refer to the two-column ``head" subtableau as $\tau_+$ and the ``tail" subtableau as $\tau_-$.

Now fix $i \geq 0$, and define a map $\phi: S_i(\lambda,\mu) \rightarrow S_i(\lambda,\bar{\mu})$ that is the identity on $\tau_+$ and which maps each $\tau_-$ to $\bar{\tau}_-^*$.  Notice that the ``flipping" portion of $\phi \vert_{\tau_-}$ ensures that $\phi(\tau)$ has content $\bar{\mu}$.  By Lemmas \ref{thm: permutation invariance lemma 1} and \ref{thm: permutation invariance lemma 2} we see that $n_{inv}(\bar{\tau}^*) = j(N-j) - n_{inv}(\bar{\tau}) = j(N-j) - j(N-j) + n_{inv}(\tau) = n_{inv}(\tau)$, ensuring that $\phi(\tau)$ is in fact an element of $S_i(\lambda,\bar{\mu})$.  As $\phi$ is clearly reversible it represents a bijection.
\end{proof}

\begin{figure}[h!]
\centering
\scalebox{.85}{
\begin{ytableau}
1 & 2 \\
\vdots & \vdots \\
1/2 \\
\vdots
\end{ytableau}
}
\caption{General form a two-column row-standard tableau with content $\mu = 1^j 2^{N-j}$}
\label{fig: two-column lemma form}
\end{figure}

\begin{theorem}
\label{thm: permutation invariance, specific i}
Take any tableau shape $\lambda$ and any content $\mu = 1^{\mu_1}2^{\mu_2} \hdots M^{\mu_M}$ compatible with $\lambda$.  For any permutation $\sigma \in S_M$, we have $\vert S_i(\lambda,\mu) \vert = \vert S_i(\lambda,\sigma(\mu)) \vert$ for all $i \geq 0$.
\end{theorem}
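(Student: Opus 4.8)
The plan is to reduce to the case where $\sigma = s_k$ is the adjacent transposition interchanging $k$ and $k{+}1$. Since $s_1,\dots,s_{M-1}$ generate $S_M$, once we know $|S_i(\lambda,\mu)| = |S_i(\lambda,s_k\mu)|$ for every $k$, every $\mu$ and every $i$, the general equality follows by chaining these identities along a reduced word for $\sigma$. So it suffices, for each $k$, to construct a bijection $\Phi_k\colon S_i(\lambda,\mu)\to S_i(\lambda,s_k\mu)$, where $s_k\mu$ is obtained from $\mu$ by swapping the multiplicities of $k$ and $k{+}1$. The map $\Phi_k$ will be a Bender--Knuth-type involution adapted to inversions, touching only the entries equal to $k$ or $k{+}1$.

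Concretely, I would call a cell \emph{active} if it contains $k$ or $k{+}1$, call an active cell \emph{locked} if it is horizontally adjacent to an active cell carrying the complementary value (a $k$ with a $k{+}1$ immediately to its right, or a $k{+}1$ with a $k$ immediately to its left), and call it \emph{free} otherwise. Locked cells occur in fixed horizontal $(k,k{+}1)$ dominoes; $\Phi_k$ leaves every locked or non-active cell where it is. For the free cells, observe that a free $k$ or $k{+}1$ has left neighbour $\leq k-1$ and right neighbour $\geq k+2$ whenever those neighbours exist, so recoloring a free cell to the other of $\{k,k{+}1\}$ can never violate the strict row condition. Within each column $c$, $\Phi_k$ acts on the free cells of that column exactly as the map $\tau_-\mapsto \bar{\tau}_-^*$ of Lemma~\ref{thm: permutation invariance lemma 3} acts on a one-column ``tail'': reading those free cells in the height order of column $c$ (Proposition~\ref{def: inversions from height order}), replace their sequence of values by its reversal and then swap $k\leftrightarrow k{+}1$. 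Because reversal and the $k\leftrightarrow k{+}1$ swap commute and are each involutions, $\Phi_k$ is an involution, hence a bijection.

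It then remains to check that $\Phi_k$ lands in $S_i(\lambda,s_k\mu)$, i.e.\ that it sends content $\mu$ to content $s_k\mu$ and preserves $n_{inv}$. The content statement is a counting argument: each locked domino always contributes one $k$ and one $k{+}1$, while a column with $p_c$ free $k$'s and $q_c$ free $k{+}1$'s is sent to one with $q_c$ free $k$'s and $p_c$ free $k{+}1$'s, so summing over columns the total multiplicities of $k$ and $k{+}1$ are interchanged. For $n_{inv}$, the guiding observation is that the height order on a column is determined by the geometry of that column together with the entries and height orders of the columns strictly to its right, and that the distinction between $k$ and $k{+}1$ at a free cell enters only through the value comparison ``$b_i<b_j$'' one column over. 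This is what should let one split the inversion pairs of $\tau$ into those involving no free cell, which are manifestly unaffected by $\Phi_k$, and those governed by the free-cell configuration of a single column $c$ together with column $c{+}1$ --- precisely the one-column situation analysed in Lemmas~\ref{thm: permutation invariance lemma 1} and~\ref{thm: permutation invariance lemma 2}. For the latter, the reverse-and-recolor move preserves the count by the identity $n_{inv}(\bar{\tau}^*) = j(N-j)-n_{inv}(\bar{\tau}) = j(N-j)-j(N-j)+n_{inv}(\tau) = n_{inv}(\tau)$ already exploited in Lemma~\ref{thm: permutation invariance lemma 3}, so $n_{inv}(\Phi_k(\tau))=n_{inv}(\tau)$.

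The main obstacle is precisely this last localization step. Recoloring the free cells of column $c{+}1$ really can flip height-order comparisons back in column $c$, and therefore change the inversion status of pairs whose two entries are themselves neither $k$ nor $k{+}1$; the delicate work is a case analysis of the three clauses of the height-order definition, sorting the cells of each column into inactive, locked-active, and free-active, that shows every such flip is matched by an equal and opposite change accounted for in the Lemma~\ref{thm: permutation invariance lemma 1}/\ref{thm: permutation invariance lemma 2} bookkeeping of the very column $c$ whose free cells were recolored, so that $n_{inv}$ is conserved column by column. Once that is in place, the theorem follows as above.
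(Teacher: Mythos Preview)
Your reduction to adjacent transpositions and the Bender--Knuth framing are the right opening moves, and they match the paper's strategy. The gap is in the map $\Phi_k$ itself: leaving every non-active cell fixed is not enough, and the ``equal and opposite change'' you invoke at the end does not occur. Here is a concrete failure. Take $\lambda=(2,2,2)$, $\mu=1^1 2^2 3^2 4^1$, $k=3$, and $\tau$ with rows $(1,3),(2,4),(2,3)$. Every active cell is free, so your $\Phi_3$ reads column~2 in height order as $3,4,3$, reverses to $3,4,3$, swaps to $4,3,4$, and leaves column~1 alone, producing rows $(1,4),(2,3),(2,4)$. But $n_{inv}(\tau)=1$ (the single pair $(3,4)^2$), whereas $n_{inv}(\Phi_3(\tau))=2$: column~2 still contributes one inversion $(3,4)^2$, and now column~1 contributes a new inversion $(1,2)^1$ because the height order there has become $r_2\blacktriangleleft r_1\blacktriangleleft r_3$ while the values $1,2,2$ sit in rows $r_1,r_2,r_3$. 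The recoloring in column~2 flipped the height-order comparison between the non-active entries $1$ and $2$ in column~1, and nothing in your bookkeeping cancels it.

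The paper's proof neutralises exactly this effect by \emph{not} leaving the leftward non-active cells fixed. It locates the $\{a,a+1\}$ region inside the standardization $st(\tau)$, cuts it into one- or two-column blocks, applies the Lemma~\ref{thm: permutation invariance lemma 3} bijection block by block, and then---crucially---permutes the rows to the left of each block so as to restore the original height order of those leftward entries. In the example above, this reordering turns column~1 into $(2,1,2)$, giving rows $(2,4),(1,3),(2,4)$, which has $n_{inv}=1$ as required. Your last paragraph correctly diagnoses the obstacle but tries to argue it away by a cancellation that is not there; the actual fix is to move the leftward cells, not to leave them in place.
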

\begin{proof}
We show $\vert S_i(\lambda,\mu) \vert = \vert S_i(\lambda,\sigma(\mu)) \vert$ for a simple transposition $(a,a+1) \in S_M$ of consecutive elements $a,a+1 \in \lbrace 1,2,\hdots,M \rbrace$.  The general result then follows from repeated application of our procedure.

So fix the inversion number $i \geq 0$ and consider two consecutive elements $a,a+1 \in \lbrace 1,2,\hdots M \rbrace$.  For any $\tau \in S_i(\lambda,\mu)$, identify the entries in the standardization $st(\tau)$ where $a,a+1$ appear.  The boxes with these two entries form a skew sub-tableau $\eta_\tau$ with content $a^{\mu_a}(a+1)^{\mu_{a+1}}$; as $st(\tau)$ is row-standard, no row in $\eta_\tau$ contains more than two entries.  Subdivide $\eta_\tau$ into a set of ``blocks" $\eta_\tau^j$, one for each upper-left corner entry $\alpha_j$ in $\eta_\tau$, by beginning with the first column of $\eta_\tau$ and assigning to $\eta_\tau^j$ all entries that are below or to the right of $\alpha_j$ and which have not yet been assigned to any previous block.  An example of this procedure is shown in Figure \ref{fig: standard inverted example}.

\begin{figure}[h!]
\centering
\scalebox{.8}{
\ytableaushort{\none \none \none \none \none \none D, \none \none \none \none C C, \none \none \none B B, \none \none \none B, A A, A A, A}
* {7,7,7,7,6,5,5}
* [*(gray!50)]{6+1,4+2,3+2,3+1,2,2,1}
}
\caption{A standarized 7-row tableau with four ``blocks" for the consecutive entries $a$,$a+1$.}
\label{fig: permutation invariance blocks}
\end{figure}

Looking more closely at our ``blocks", begin by noting that each block $\eta_\tau^j$ is a (non-skew) one- or two-column tableau with content of the form $a^x (a+1)^y$ for some $x,y \geq 0$.  Due to the way that all ``boundary entries" in $\eta_\tau$ are assigned to the leftmost of the two adjacent blocks (such as the lower-right corner in Figure \ref{fig: permutation invariance blocks} that is assigned a $B$ instead of a $C$), reassigning entries within a fixed $\eta_\tau^j$ never produces a new $\eta_\tau$ that fails to be row-standard.  The assignment of boundary entries also eliminates the possibility that reassigning entries within a fixed $\eta_\tau^j$ may produce a new inversion pair with one entry from each of two distinct blocks of $\eta_\tau$, as the boundary entry between two blocks is always necessary $a+1$.  As $a$ and $a+1$ are consecutive numbers, reassigning entries within $\eta_\tau^j$ never effects the relationship of entries within $\eta_\tau$ to  entries from outside of $\eta_\tau$.  This means that reassigning entries within a fixed $\eta_\tau^j$ never produces a $\tau$ that fails to be row-standard and doesn't create/eliminate inversions that involve an entry from outside of $\eta_\tau$.  The general conclusion to be drawn from all of these observations is that a valid reassignment within a fixed $\eta_\tau^j$ only effects $n_{inv}(\tau)$ in that it may create/eliminate inversion pairs where both involved entries are from that specific block $\eta_\tau^j$.

Now consider the set of all $i$-inverted tableaux in $S_i(\lambda,\mu)$.  Group $\tau \in S_i(\lambda,\mu)$ into subsets depending upon the exact shape (and placement) of the skew sub-tableaux $\eta_\tau$ within $st(\tau)$, and for each distinct placement $\gamma$ define $S_i^\gamma(\lambda,\mu) = \lbrace \tau \in S_i(\lambda,\mu) \ \vert \ \eta_\tau \text{ has placement } \gamma \text{ in } st(\tau) \rbrace$.  Similarly define $S_i^\gamma(\lambda,\sigma(\mu)) = \lbrace \tau \in S_i(\lambda,\sigma(\mu)) \ \vert \ \eta_\tau \text{ has placement } \gamma \text{ in } st(\tau) \rbrace$.  As $a$ and $a+1$ are consecutive numbers, notice that the acceptable placements $\gamma$ for $\eta_\tau$ are identical for contents $\mu$ and $\sigma(\mu)$.  Our strategy is to define a map $\phi: S_i(\lambda,\mu) \rightarrow S_i(\lambda,\sigma(\mu))$ that restricts to a bijection $\phi \vert_\gamma : S_i^\gamma (\lambda,\mu) \rightarrow S_i^\gamma(\lambda,\sigma(\mu))$ for each possible placement $\gamma$.  So take any placement $\gamma$ of $\eta_\tau$ and define $\phi \vert_\gamma: S_i^\gamma(\lambda,\mu) \rightarrow S_i^\gamma(\lambda,\sigma(\mu))$ as follows for an arbitrary $\tau \in S_i^\gamma(\lambda,\mu)$: 

\begin{enumerate}
\item For each block $\eta_\tau^j$ in $\eta_\tau$, we interpret $\eta_\tau^j$ as a one- or two-column inverted tableau of the sort described in Lemma \ref{thm: permutation invariance lemma 3} by considering the entries of $\eta_\tau^j$ along with all inversion pairs $(a,a+1)$ where both members are elements of $\eta_\tau^j$.  We then let the similarly placed block in $\phi \vert_\gamma(\tau)$ have entries and inversion pairs determined by the bijection of Lemma \ref{thm: permutation invariance lemma 3}.

\item Outside of $\eta_\tau$, $\phi \vert_\gamma$ is the identity apart from entries that lie directly to the left of a block $\eta_\tau^j$ where the bijection of Lemma \ref{thm: permutation invariance lemma 3} changes the height order.  In these circumstances, $\phi \vert_\gamma$ permutes the rows to the left of $\eta_\tau^j$ in the unique way that preserves the original height order of those elements.
\end{enumerate}

For an example of this map applied to a rectangular tableau with five total inversions, see Figure \ref{fig: permutation invariance example}.  In step \#1 of our procedure for $\phi$, Lemma \ref{thm: permutation invariance lemma 3} and our preceding ``block" observations guarantees that $\phi \vert_\gamma$ fixes the number of inversions involving entries from the block $\eta_\tau^j$ (or any other entries rightward from $\eta_\tau^j$).  In step \#2 of our procedure, the column reordering ensures that $\phi \vert_\gamma$ neither creates nor eliminates any inversion pairs involving entries that lie to the left of any block $\eta_\tau^j$.  The overall effect is that $\phi \vert_\gamma$ fixes the total number of inversions.  By the definition of the bijections from Lemma \ref{thm: permutation invariance lemma 3}, $\phi \vert_\gamma$ also clearly flips the content of $a$ and $a+1$.  It follows that $\phi \vert_\gamma$ is a well-defined map from $S_i^\gamma (\lambda,\mu)$ to $S_i^\gamma (\lambda,\sigma(\mu))$, and as $\phi \vert_\gamma$ is clearly reversible it is a bijection.  Thus $\vert S_i^\gamma (\lambda,\mu) \vert = \vert S_i^\gamma (\lambda,\sigma(\mu)) \vert$ for all placements $\gamma$, and we may conclude that $\vert S_i (\lambda,\mu) \vert = \vert S_i (\lambda,\sigma(\mu)) \vert$.
\end{proof}

\begin{figure}[h!]
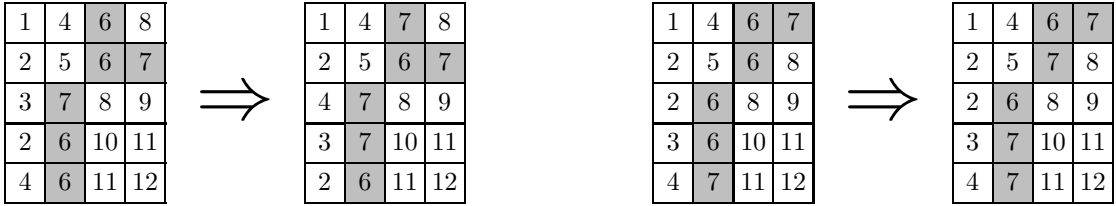

\centering
\scalebox{.9}{
\begin{ytableau}
1 & 4 & *(gray!50) 6 & 8 \\
2 & 5 & *(gray!50) 6 & *(gray!50) 7 \\
3 & *(gray!50) 7 & 8 & 9 \\
2 & *(gray!50) 6 & 10 & 11 \\
4 & *(gray!50) 6 & 11 & 12
\end{ytableau}
\hspace{.08in}
\raisebox{-32pt}{\scalebox{3}{$\Rightarrow$}}
\hspace{.08in}
\begin{ytableau}
1 & 4 & *(gray!50) 7 & 8 \\
2 & 5 & *(gray!50) 6 & *(gray!50) 7 \\
4 & *(gray!50) 7 & 8 & 9 \\
3 & *(gray!50) 7 & 10 & 11 \\
2 & *(gray!50) 6 & 11 & 12
\end{ytableau}
\hspace{1in}
\begin{ytableau}
1 & 4 & *(gray!50) 6 & *(gray!50) 7 \\
2 & 5 & *(gray!50) 6 & 8 \\
2 & *(gray!50) 6 & 8 & 9 \\
3 & *(gray!50) 6 & 10 & 11 \\
4 & *(gray!50) 7 & 11 & 12
\end{ytableau}
\hspace{.08in}
\raisebox{-32pt}{\scalebox{3}{$\Rightarrow$}}
\hspace{.08in}
\begin{ytableau}
1 & 4 & *(gray!50) 6 & *(gray!50) 7 \\
2 & 5 & *(gray!50) 7 & 8 \\
2 & *(gray!50) 6 & 8 & 9 \\
3 & *(gray!50) 7 & 10 & 11 \\
4 & *(gray!50) 7 & 11 & 12
\end{ytableau}
}
\caption{Part of $\phi: S_5(\lambda,\mu) \rightarrow S_5(\lambda,\sigma(\mu))$ for $\lambda = (4,4,4,4,4)$, $\mu = 1^1 2^2 3^1 4^2 5^1 6^4 7^2 8^2 9^1 10^1 11^2 12^1$ and $\sigma= (6 \ 7)$.  The left side shows $\tau \in S_5(\lambda,\mu)$ and $\sigma(\tau)$, both of which have inversion pairs $(3,4)^1$, $(4,5)^2$, $(6,7)^2$, $(6,7)^2$, $(7,8)^4$.  The right side shows $st(\tau)$ and $st(\sigma(\tau))$.  Notice how the first column of $\sigma(\tau)$ has been reordered to the left of the bottom ``block" in order to maintain the relative height order.}
\label{fig: permutation invariance example}
\end{figure}

As one quick corollary of Theorem \ref{thm: permutation invariance, specific i}, notice that the total number of inverted semistandard tableaux of shape $\lambda$ (ranging over all possible numbers of inversions) is invariant under permutation of content:

\begin{corollary}
\label{thm: permutation invariance, total number}
Take any tableau shape $\lambda$ and any content $\mu = 1^{\mu_1}2^{\mu_2} \hdots K^{\mu_K}$ compatible with $\lambda$.  For any permutation $\sigma \in S_M$, $\vert I(\lambda,\mu) \vert = \vert I(\lambda,\sigma(\mu)) \vert$.
\end{corollary}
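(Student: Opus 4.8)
The plan is to deduce the statement immediately from Theorem \ref{thm: permutation invariance, specific i} by summing over all possible inversion numbers. First I would recall that $I(\lambda,\mu)$ is, by definition, the disjoint union $\bigcup_{i \geq 0} S_i(\lambda,\mu)$, so that $\vert I(\lambda,\mu) \vert = \sum_{i=0}^{\infty} \vert S_i(\lambda,\mu) \vert$, with the analogous identity for $\sigma(\mu)$. To guarantee that these sums are finite (hence that the manipulation below is legitimate), I would invoke Theorem \ref{thm: maximum inversion number}: it shows $S_i(\lambda,\mu) = \emptyset$ whenever $i > M_{\lambda,\mu}$, so only finitely many terms are nonzero. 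Since $\mu$ and $\sigma(\mu)$ have the same multiset of parts $\{\mu_1,\dots,\mu_M\}$, the bound $M_{\lambda,\mu} = M_{\lambda,\sigma(\mu)}$ is unchanged under permutation of content, so both unions are cut off at the same index.

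Next I would apply Theorem \ref{thm: permutation invariance, specific i} term by term: for each fixed $i \geq 0$ we already know $\vert S_i(\lambda,\mu) \vert = \vert S_i(\lambda,\sigma(\mu)) \vert$. Summing this equality over $i$ from $0$ to $M_{\lambda,\mu}$ (a finite sum, with all further terms zero on both sides) gives $\vert I(\lambda,\mu) \vert = \vert I(\lambda,\sigma(\mu)) \vert$, as desired.

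There is no real obstacle here; the only points that merit an explicit sentence are the finiteness of the sum — supplied by Theorem \ref{thm: maximum inversion number} — and the observation that the set of admissible contents obtained from $\mu$ is closed under $S_M$, so the reduction to a simple transposition $\sigma = (a\ a{+}1)$ used in the proof of Theorem \ref{thm: permutation invariance, specific i} is already inherited. If desired, I could also phrase the argument without reference to the maximum inversion number by noting directly that any tableau of finite shape admits only finitely many inversion pairs, but citing Theorem \ref{thm: maximum inversion number} keeps the bookkeeping cleanest.
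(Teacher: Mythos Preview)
Your proposal is correct and is exactly the approach the paper takes: the corollary is stated as an immediate consequence of Theorem~\ref{thm: permutation invariance, specific i}, obtained by summing $\vert S_i(\lambda,\mu)\vert = \vert S_i(\lambda,\sigma(\mu))\vert$ over all $i \geq 0$. Your extra remarks about finiteness via Theorem~\ref{thm: maximum inversion number} are more than the paper bothers to spell out, but they are harmless and make the argument cleaner.
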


\section{Enumeration of Inverted Semistandard Young Tableaux}
\label{sec: enumeration}

With the tools of Section \ref{sec: basic results} in place, we are ready to present enumerative results about inverted semistandard Young tableaux.  As with \cite{BD}, closed formulas for $\vert S_i(\lambda,\mu) \vert$ when $i$ is arbitrary are only tractable for certain ``easy" choices of $\lambda$, namely one-column and one- or two-row shapes.  After fully addressing those ``easy" cases, we directly enumerate $S_1(\lambda,\mu)$ for arbitrary $\lambda$ and $\mu$ by placing that set in bijection with a collection of sets of (non-inverted) semistandard Young tableaux $\bigcup_k S_0(\widetilde{\lambda}_k,\mu)$, thus generalizing Theorems 3.1 and 3.2 of \cite{BD}.

\subsection{Enumerating $i$-Inverted Semistandard Young Tableaux, $\lambda = (1,1,\hdots,1)$}
\label{subsec: enumeration one column}

In the standard tableaux setting of \cite{BD}, one of the few choices of $\lambda$ for which specific $\vert S_i (\lambda) \vert$ could be directly computed were the single-column shapes $\lambda = 1^m$ ($m \geq 1$).  As inverted standard Young tableaux with one-column are equivalent to permutations, that paper cited the standard result \cite{Stanley2} to give $\vert S_i (\lambda) \vert = M(m-1,i)$, where $M(m-1,i)$ is the Mahonian number.  If we let $[p]_q = 1 + q + \hdots + q^{p-1}$ be the q-number and let $[p]_q! = [1]_q [2]_q \hdots [p]_q$ be the q-factorial, this gave the generating function:
\begin{equation}
\label{eq: one-column generating function, standard case}
\sum_{i=0}^\infty \vert S_i(\lambda) \vert q^i \ = \ (1+q)(1+q+q^2) \hdots (1+q+ \hdots + q^{m-1}) \ = \ [2]_q [3]_q \hdots [m]_q \ = \ [m]_q !
\end{equation}

The single-column case is also relatively tractable when we allow for repeated entries, yielding a direct generalization of the generating function from Equation \ref{eq: one-column generating function, standard case}.  In what follows, we use the standard notation $\binom{a}{b}_q = \frac{[a]_q !}{[b]_q ! [a-b]_q !} = \frac{(1-q^a)(1-q^{a-1}) \hdots (1-q^{a-b+1})}{(1-q)(1-q^2) \hdots (1-q)^b}$ for the q-binomial coefficients.

\begin{theorem}
\label{thm: enumeration one column}
Let $\lambda = 1^M$ be the one-column tableau shape with $M$ total entries, and let $\mu = 1^{\mu_1} 2^{\mu_2} \hdots M^{\mu_m}$ be some content such that $\sum_k \mu_k = M$.  Then we have generating function:
\begin{center}
$\displaystyle{\sum_{i=0}^\infty \vert S_i(\lambda,\mu) \vert q^i \ = \ \binom{\mu_1}{\mu_1}_q \binom{\mu_1+\mu_2}{\mu_2}_q \binom{\mu_1 + \mu_2 + \mu_3}{\mu_3}_q \hdots \binom{M}{\mu_m}_q \ = \ \frac{[M]_q !}{[\mu_1]_q ! [\mu_2]_q ! \hdots [\mu_m]_q !}}$
\end{center}
\end{theorem}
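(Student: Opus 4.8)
The plan is to prove the generating function identity by induction on the number of distinct entries $m$, peeling off the largest entry $m$ and reducing to a one-column shape with one fewer distinct value. The key structural observation: given a one-column row-standard tableau $\tau$ of shape $\lambda = 1^M$ and content $\mu = 1^{\mu_1}\cdots m^{\mu_m}$, the $\mu_m$ copies of $m$ occupy some set of $\mu_m$ positions among the $M$ rows. Since all copies of $m$ are equal, they are never inverted with each other, and since $m$ is the largest entry, any pair $(k,m)$ with $k < m$ is an inversion pair exactly when that copy of $k$ lies below that copy of $m$ (by Proposition~\ref{def: inversions from height order}, as $m \blacktriangleleft k$ in a single column means $m$ is physically below... wait, no --- in a single column the height order is just the entry order, so $(k,m)$ is an inversion iff $m$ is above $k$). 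So first I would establish: the number of inversions of $\tau$ equals the number of inversions among the entries $1,\dots,m-1$ (viewed in their induced relative order, which forms a row-standard tableau $\tau'$ of shape $1^{M-\mu_m}$ and content $1^{\mu_1}\cdots(m-1)^{\mu_{m-1}}$) plus the number of pairs (copy of $k<m$, copy of $m$) with the $m$ above the $k$.

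Next I would set up the bijective/generating-function bookkeeping. Fix the relative arrangement of the non-$m$ entries, i.e. fix $\tau' \in S_j(1^{M-\mu_m}, 1^{\mu_1}\cdots(m-1)^{\mu_{m-1}})$. Interleaving the $\mu_m$ copies of $m$ into the $M$ slots amounts to choosing a lattice path / a multiset of "how many non-$m$ entries sit below each copy of $m$", and the classical fact is that the number of such interleavings producing exactly $t$ new inversions (pairs with $m$ above a smaller entry) is the coefficient of $q^t$ in the $q$-binomial $\binom{M}{\mu_m}_q = \binom{(M-\mu_m)+\mu_m}{\mu_m}_q$. Concretely, summing over all interleavings weighted by $q^{\#\text{new inversions}}$ gives exactly $\binom{M}{\mu_m}_q$, independently of $\tau'$. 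Therefore
\[
\sum_{i} |S_i(1^M,\mu)|\, q^i \;=\; \binom{M}{\mu_m}_q \sum_{j} |S_j(1^{M-\mu_m}, 1^{\mu_1}\cdots(m-1)^{\mu_{m-1}})|\, q^j,
\]
and the inductive hypothesis finishes it, with the base case $m=1$ being the trivial tableau (one element, generating function $1 = \binom{\mu_1}{\mu_1}_q$). The telescoping product $\binom{\mu_1}{\mu_1}_q\binom{\mu_1+\mu_2}{\mu_2}_q\cdots\binom{M}{\mu_m}_q = \frac{[M]_q!}{[\mu_1]_q!\cdots[\mu_m]_q!}$ is a standard $q$-multinomial identity.

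I would also remark that Theorem~\ref{thm: permutation invariance, specific i} is not logically needed here but gives a sanity check: the right-hand side is manifestly symmetric in $\mu_1,\dots,\mu_m$, consistent with permutation invariance of content. The one point deserving care is the claim that the $q$-weighted count of interleavings is exactly $\binom{M}{\mu_m}_q$: this is the standard statement that $q^{\mathrm{inv}}$ summed over words with $\mu_m$ letters $m$ and $M-\mu_m$ smaller letters gives the Gaussian binomial, where $\mathrm{inv}$ counts pairs (larger letter, later-positioned smaller letter) --- exactly our "new inversion" count once one notes "below in the column" corresponds to "later in the word" and "$m$ above a smaller entry" is precisely an inversion of the word. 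I expect the main obstacle to be purely expository: making rigorous the claim that inversions of $\tau$ split cleanly as (inversions internal to $\tau'$) $\sqcup$ (cross pairs involving an $m$), which requires invoking Proposition~\ref{def: inversions from height order} to confirm that in a single column the height order coincides with top-to-bottom position and that equal entries contribute nothing, and verifying that the induced suborder on the non-$m$ entries genuinely realizes $\tau'$ as an inverted tableau with the same internal inversion set.
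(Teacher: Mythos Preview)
Your proposal is correct and is essentially the same argument as the paper's. The paper phrases it as an iterative build-up (insert the $\mu_n$ copies of $n$ into $\tau_{n-1}$ for $n=1,\dots,m$) rather than as a formal induction on $m$, and it identifies the per-step generating function $\binom{\mu_1+\cdots+\mu_n}{\mu_n}_q$ via the partition interpretation of the $q$-binomial rather than the inversion statistic on binary words; but the decomposition of $n_{\mathrm{inv}}(\tau)$ into ``inversions among smaller entries'' plus ``pairs with $m$ above a smaller entry'' and the resulting product formula are identical.
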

\begin{proof}
It is possible to ``build up" any inverted tableau $\tau \in I(\lambda,\mu)$ by recursively inserting $\mu_n$ copies of $n$ into a one-column tableau $\tau_{n-1}$ with content $1^{\mu_1} 2^{\mu_2} \hdots (n-1)^{\mu_{n-1}}$, producing a sequence $\lbrace \tau_1, \tau_2, \hdots, \tau_m \rbrace$ of one-column tableaux such that $\tau_m = \tau$.  Notice that distinct placements at any step in this process always results in distinct $\tau$, so this procedure describes a way to uniquely determine every element of $I(\lambda,\mu)$.  Furthermore, the number of inversion pairs in the resulting $\tau$ whose larger element is $n$ is determined entirely by the insertion of the $\mu_n$ copies of $n$ into $\tau_{n-1}$: this ``level $n$ step" is the only point at which such inversion pairs may appear, and the number of such inversion pairs is not dependent upon the prior arrangement of the $1^{\mu_1} 2^{\mu_2} \hdots (n-1)^{\mu_{n-1}}$ or the later placement of larger entries.

So fix a one-column tableau $\tau_{n-1}$ with content $1^{\mu_1} 2^{\mu_2} \hdots (n-1)^{\mu_{n-1}}$.  A copy of $n$ placed above $j$ entries in $\tau_{n-1}$ results in $j$ inversion pairs whose larger entry is that instance of $n$.  In particular, each instance of $n$ may be involved in up to $\mu_1 + \mu_2 + \hdots + \mu_{n-1}$ inversion pairs where it is the larger entry, and the number of such inversion pairs involving a particular instance of $n$ is independent of the placement of other instances of $n$.  Now consider the number of tableaux obtained from $\tau_{n-1}$ with precisely $i$ inversion pairs whose larger entry is $n$.  By our preceding comments, these tableaux are in bijection with partitions of $i$ into at most $\mu_n$ parts (one part corresponding to each instance of $n$) where each part is less than or equal to $\mu_1 + \hdots + \mu_{n-1}$.

It is well known that the coefficient of $q^i$ in $\binom{a+b}{a}_q$ equals the number of partitions of $i$ into at most $a$ parts, with each part less than or equal to $b$.  If $\vert \tau_{n-1}^i \vert$ denotes the number tableaux $\tau_n$ obtained from $\tau_{n-1}$ with precisely $i$ inversion pairs whose larger entry is $n$, we then have generating function

\begin{equation}
\label{eq: single column generating function lemma}
\displaystyle{\sum_{i=0}^\infty \vert \tau_{n-1}^i \vert q^i = \binom{\mu_1 + \mu_2 + \hdots + \mu_n}{\mu_n}_q}
\end{equation}

As every inversion pair in $\tau \in I(\lambda,\mu)$ appears at a unique step in the sequence $\lbrace \tau_1, \tau_2, \hdots , \tau_m \rbrace$, multiplying the generating functions of Equation \ref{eq: single column generating function lemma} for $1 \leq i \leq m$ gives the result.
\end{proof}

Notice that if $\mu_k = 1$ for all $k$, Theorem \ref{thm: enumeration one column} recovers the standard tableaux result of Equation \ref{eq: one-column generating function, standard case}.  Specialization of Theorem \ref{thm: enumeration one column} at $q=1$ shows that the total number of inverted semistandard Young tableaux of shape $\lambda$ is $\vert I(\lambda,\mu) \vert = \frac{M!}{\mu_1 ! \mu_2 ! \hdots \mu_m !}$, independently verifying Proposition \ref{thm: total inversions, one column}.

\subsection{Enumerating $i$-Inverted Semistandard Young Tableaux, $\lambda = (n,n)$}
\label{subsec: enumeration two rows}

The other basic shapes $\lambda$ that admitted a direct enumeration of i-inverted standard Young tableaux in \cite{BD} were one-row and two-row tableau shapes.  The row-standard condition made one-row shapes predictably trivial: if $\lambda = (n)$ for any $n \geq 1$, then $\vert S_0(\lambda) \vert =1$ and $\vert S_i(\lambda) \vert = 0$ for all $i \geq 1$.  The two-row case involved a recognition of the fact that any two-row inverted tableau necessarily ``split" after a column in which it possessed an inversion pair.  As shown in \cite{BD}, if $\lambda = (n,n)$ the formula for $\vert S_i (\lambda) \vert$ depends upon summations of products of Catalan numbers $C_k$ where the subscripts in each term partition $n$:

\begin{equation}
\label{eq: two-row enumeration, standard case}
\vert S_i(\lambda) \vert \ = \ \left( \sum_{k_1 + \hdots + k_i = n} C_{k_1} C_{k_2} \hdots C_{k_i} \right) + \left( \sum_{l_1 + \hdots + l_{i+1} = n} C_{l_1} C_{l_2} \hdots C_{l_{i+1}} \right)
\end{equation}

Equation \ref{eq: two-row enumeration, standard case} admits a very direct generalization to the semistandard case in the form of Theorem \ref{thm: enumeration two row}.  This theorem also marks our first usage of Theorem \ref{thm: permutation invariance, specific i} as a powerful simplifying tool:

\begin{theorem}
\label{thm: enumeration two row}
Let $\lambda = (n,n)$, any $n \geq 1$, and let $\mu= 1^{\mu_1} 2^{\mu_2} \hdots$ be some content such that $\sum_k \mu_k = 2n$.

\begin{enumerate}
\item If $\mu_k > 2$ for any $k$, then $\vert S_i(\lambda,\mu) \vert = 0$ for all $i \geq 0$.
\item If $\mu_k = 2$ for precisely $m$ choices of $k$ and $\mu_k = 1$ for the remaining $2n-2m$ choices of $k$, then:
\begin{center}
$\displaystyle{\vert S_i(\lambda,\mu) \vert = \left( \sum_{j_1 + \hdots + j_i = n-m} C_{j_1} C_{j_2} \hdots C_{j_i} \right)  + \left( \sum_{l_1 + \hdots + l_{i+1} = n-m} C_{l_1} C_{l_2} \hdots C_{l_{i+1}} \right) }$
\end{center}
Where $C_j$ is the $j^{th}$ Catalan number, and the summations run over all ordered partitions of length $i$ and $i+1$, respectively.
\end{enumerate}
\end{theorem}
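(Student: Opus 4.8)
The plan is to reduce the general content $\mu$ to the "staircase-type" content handled in \cite{BD} using the permutation invariance of Theorem~\ref{thm: permutation invariance, specific i}, and then to adapt the structural "splitting" argument for two-row tableaux from \cite{BD} to the semistandard setting. For part (1), I would argue directly: in any row-standard two-row tableau of shape $(n,n)$, each column has height exactly $2$, so a value $k$ appearing three or more times would force two copies of $k$ into a single column, violating the strict-increase-across-rows condition in at least one of the two rows (since both rows have the same set of column indices available). Hence $\mu_k \le 2$ for all $k$ whenever $S_i(\lambda,\mu)$ is nonempty, giving $\vert S_i(\lambda,\mu)\vert = 0$ otherwise.

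For part (2), the first move is to invoke Theorem~\ref{thm: permutation invariance, specific i}: since $\vert S_i(\lambda,\mu)\vert$ depends only on the multiset of multiplicities $\{\mu_k\}$, I may replace $\mu$ by the canonical content $\nu$ in which the $m$ doubled values come first, i.e. $\nu = 1^2 2^2 \cdots m^2 (m{+}1)^1 \cdots (m{+}(2n{-}2m))^1$. Next I would analyze the structure of a tableau $\tau \in S_i(\lambda,\nu)$ the way \cite{BD} does: reading left to right, $\tau$ is column-semistandard (no inversions) in its leftmost columns until the first column carrying an inversion pair, after which the two rows "split" and the remaining entries are distributed freely subject to row-standardness; repeating this, the inversion pairs occur in $i$ distinguished columns that partition the $n$ columns into blocks, with each block before a split behaving like a $0$-inverted two-row shape. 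In the standard case each such block of size $k$ contributes a Catalan number $C_k$; the key claim is that with content $\nu$, the doubled values $1,\dots,m$ are forced into a rigid configuration (the $m$ leftmost columns must read $\begin{smallmatrix}1&2&\cdots&m\\1&2&\cdots&m\end{smallmatrix}$ in the non-inverted portion, since any split involving a doubled value among the first $m$ columns would break row-standardness or column-semistandardness), so those $m$ columns are inert and contribute nothing, while the remaining $n-m$ columns carry all the combinatorial freedom and reproduce exactly the \cite{BD} count with $n$ replaced by $n-m$. This yields the two Catalan-convolution sums over ordered partitions of $n-m$ into $i$ and $i+1$ parts, the two summands corresponding to whether the last column of $\tau$ is itself an "inverted" column or a "non-inverted" one (equivalently, whether the final block lies to the right of the last split).

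The main obstacle I anticipate is making precise the claim that the $m$ doubled columns are genuinely inert and, relatedly, verifying that the bijection with blocks of a pure two-row standard shape of size $n-m$ is content-independent — that is, that every ordered partition $(l_1,\dots,l_{i+1})$ of $n-m$ together with a choice of a $0$-inverted $(k,k)$-configuration in each block extends to exactly one valid $\tau \in S_i(\lambda,\nu)$, and conversely. One must check carefully that inserting the single-multiplicity values $m{+}1,\dots,2n{-}m$ into the "split" blocks can always be done in exactly $C_{l_j}$ ways with no extra or missing choices caused by the presence of the doubled prefix, and that inversions never propagate across the boundary between the inert prefix and the active suffix. Once that block-decomposition lemma is established, the enumeration is a routine transcription of the generating-function bookkeeping in \cite{BD}: sum over all ways to cut $n-m$ columns into $i$ (resp. $i+1$) ordered blocks and multiply the Catalan contributions, which is precisely the stated formula.
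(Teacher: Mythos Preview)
Your proposal is correct and follows essentially the same route as the paper: invoke Theorem~\ref{thm: permutation invariance, specific i} to put the doubled values first, observe that the first $m$ columns are then completely forced (each column $j\le m$ must be $\binom{j}{j}$ by row-standardness), and reduce to the standard two-row case of \cite{BD} on the remaining $n-m$ columns. The paper's version is slightly more economical in that it simply notes the truncation map $\tau \mapsto \tau|_{\text{last }n-m\text{ columns}}$ is a bijection $S_i(\lambda,\mu)\to S_i((n-m,n-m))$ and cites Theorem~2.3 of \cite{BD} directly, rather than re-running the block-decomposition argument; your worry about inversions ``propagating across the boundary'' dissolves once you observe that a column with two equal entries cannot participate in any inversion pair, so the inert prefix really is inert.
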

\begin{proof}
Case \#1 is immediate because no such tableau can be row-standard.  For Case \# 2, by Theorem \ref{thm: permutation invariance, specific i} we may assume that $\mu_k = 2$ for $1 \leq k \leq m$ and $\mu_k = 1$ for $k > m$.  This means that the first $m$ columns of any $\tau \in  I(\lambda,\mu)$ each consist of two instances of the same entry, and hence cannot partake in an inversion pair.  Thus the only place where $\tau$ isn't predetermined, as well as the only place where $\tau$ may possess inversion pairs, is over it's final $n-m$ columns.  Notice that, when restricted to these final $n-m$ columns, any $\tau \in I(\lambda,\mu)$ becomes a inverted standard Young tableau with $2(n-m)$ distinct entries $m+1,m+2,\hdots,2n-m$.  If we define $\widetilde{\lambda} = (n-m,n-m)$, a truncation of $\tau \in I(\lambda,\mu)$ to its final $n-m$ columns and then a reindexing of its entries yields a bijection between $S_i(\lambda,\mu)$ and $S_i(\widetilde{\lambda})$ for all $i \geq 0$.  The result then follows from Theorem 2.3 of \cite{BD}.
\end{proof}

If $\mu_k = 1$ for all $k$, the formula of Theorem \ref{thm: enumeration two row} very obviously simplifies to the standard tableaux formula of Equation \ref{eq: two-row enumeration, standard case}.  Less obvious from Theorem \ref{thm: enumeration two row} is an enumeration of the total number of inverted tableaux $\vert I(\lambda,\mu) \vert$, but a similar bijection with shorter two-row standard tableaux yields the following:

\begin{proposition}
\label{thm: total inversions two row}
Let $\lambda = (n,n)$ for any $n \geq 1$, and let $\mu = 1^{\mu_1}2^{\mu_2} \hdots$ be some content with $\sum_k \mu_k = 2n$.  If $\mu_k = 2$ for precisely $m$ choices of $k$ ($0 \leq m \leq n$) and $\mu_k =1$ otherwise, then $\vert I(\lambda,\mu) \vert = \binom{2(n-m)}{n-m}$.
\end{proposition}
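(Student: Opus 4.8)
The plan is to reuse, almost verbatim, the bijective reduction from the proof of Theorem~\ref{thm: enumeration two row}, but now tracking only the underlying set $I(\lambda,\mu)$ rather than the individual inversion counts. First I would invoke Theorem~\ref{thm: permutation invariance, specific i} (equivalently, its Corollary~\ref{thm: permutation invariance, total number}) to reduce to the case where the $m$ values of multiplicity $2$ are exactly $1,2,\ldots,m$ and the values of multiplicity $1$ are $m+1,m+2,\ldots,2n-m$. With this choice of content the first $m$ columns of every $\tau \in I(\lambda,\mu)$ are forced: each such column must consist of two copies of a single value, so these columns are completely determined and carry no inversion pairs.

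Next I would argue that deleting these first $m$ columns and reindexing each remaining entry $m+j \mapsto j$ produces the two-row shape $\widetilde{\lambda} = (n-m,n-m)$ filled with the $2(n-m)$ distinct entries $1,\ldots,2(n-m)$. The row-standard condition on $\tau$ is equivalent to the row-standard condition on the truncation, and—since the deleted columns cannot participate in any inversion pair and every surviving entry exceeds every deleted entry—a pair of entries is an inversion pair of $\tau$ precisely when the corresponding pair is an inversion pair of the truncated tableau. Hence truncation-and-reindexing is a bijection $I(\lambda,\mu) \to I(\widetilde{\lambda})$, with inverse given by prepending the $m$ forced columns and shifting indices back by $m$.

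Finally I would evaluate $\vert I(\widetilde{\lambda})\vert$ directly via Equation~\ref{eq: total inverted tableaux, standard case}: for the two-row shape $\widetilde{\lambda} = (n-m,n-m)$ the product of binomials in that formula collapses to the single factor $\binom{2(n-m)}{n-m}$. Chaining this with the bijection of the previous paragraph yields $\vert I(\lambda,\mu)\vert = \binom{2(n-m)}{n-m}$, as claimed.

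There is essentially no serious obstacle here; the only point needing care is the claim in the second step that the bijection preserves the entire set of inversion pairs rather than merely their number, and this is immediate because the deleted columns are simultaneously forced and order-isolated (in the height order) from the rest of the tableau. One could instead derive the proposition by summing the two Catalan-convolution series of Theorem~\ref{thm: enumeration two row} over all $i$, but recognizing that total as $\binom{2(n-m)}{n-m}$ is exactly the content of the standard-case Equation~\ref{eq: total inverted tableaux, standard case}, so the direct bijective route is both shorter and more transparent.
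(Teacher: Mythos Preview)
Your proposal is correct and follows essentially the same route as the paper: invoke permutation invariance to push the doubled entries to the left, observe that the first $m$ columns are then forced and inversion-free, truncate to obtain a bijection with $I(\widetilde{\lambda})$ for $\widetilde{\lambda}=(n-m,n-m)$, and read off the count. The only cosmetic difference is that the paper cites Proposition~2.1 of \cite{BD} for the final binomial whereas you invoke Equation~\ref{eq: total inverted tableaux, standard case}, but these are the same fact.
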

\begin{proof}
By Theorem \ref{thm: permutation invariance, total number}, we may assume that $\mu_k =2$ for $1 \leq k \leq m$ and $\mu_k = 1$ for $k > m$.  As in the proof of Theorem \ref{thm: enumeration two row}, this means that the first $m$ columns of any $\tau \in I(\lambda,\mu)$ each consist of two instances of the same entry, and that elements of $I(\lambda,\mu)$ are in bijection with inverted standard Young tableaux of shape $\widetilde{\lambda} = (n-m,n-m)$.  The result then follows from Proposition 2.1 of \cite{BD}, as we merely need to specify which of the $2(n-m)$ non-repeated entries appear in the first row.
\end{proof}

\subsection{Enumerating $1$-Inverted Semistandard Young Tableaux}
\label{subsec: enumeration i=1}

Enumeration of $S_i(\lambda,\mu)$ for general $\lambda$ and any $i \geq 0$ is a daunting (and potentially intractable) task that wasn't even accomplished in the non-repeated entry case of \cite{BD}.  If one wishes to address arbitrary $\lambda$, one enumeration $\vert S_i (\lambda,\mu) \vert$ that remains approachable is the single inversion pair case of $i=1$.  In this subsection we exhibit a bijection between 1-inverted semistandard Young tableaux and (0-inverted) semistandard Young tableaux of a collection of related shapes.  Unlike in \cite{BD}, this doesn't allow for an immediate determination of $\vert S_1(\lambda,\mu) \vert$ via the hook-length formula, seeing as the hook-content formula for semistandard Young tableaux is ill-suited to enumeration of tableaux with a specific content.  Nonetheless, it does replace the set $S_1(\lambda,\mu)$ with a far better understood collection of sets $S_0(\widetilde{\lambda},\mu)$.  The author also conjectures that the approach of Theorems \ref{thm: enumeration i=1 rectangular} and \ref{thm: enumeration i=1 general} may be modified for the calculation of $\vert S_i (\lambda,\mu) \vert$ for some $i > 1$, akin to Conjecture 4.4 in \cite{BD}.

\begin{theorem}
\label{thm: enumeration i=1 rectangular}
Let $m,n \geq 1$, and consider the m-row shapes $\lambda = (n,\hdots,n), \widetilde{\lambda} = (n+1,n,\hdots,n,n-1)$.  For any content $\mu$ compatible with $\lambda$, $\vert S_1(\lambda,\mu) \vert = \vert S_0(\widetilde{\lambda},\mu) \vert$.  
\end{theorem}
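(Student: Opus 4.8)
The plan is to exhibit an explicit, content-preserving bijection $\Phi \colon S_1(\lambda,\mu) \to S_0(\widetilde\lambda,\mu)$, adapting to the semistandard setting the bijective arguments of \cite{BD}. The shape $\widetilde\lambda$ is obtained from the rectangle $\lambda=(n,\dots,n)$ by lengthening its first row by one box and shortening its last row by one; this is the natural target of the \cite{BD}-type construction, and the rigidity of rectangular shapes (there being essentially one such move available) is what lets a single target shape suffice here, whereas a general $\lambda$ produces the union of shapes in Theorem \ref{thm: enumeration i=1 general}.

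The first substantive step is a structural analysis of the elements of $S_1(\lambda,\mu)$. Since every column of $\lambda$ has height $m$, the height-order criterion for inversions (Proposition \ref{def: inversions from height order}) forces the unique inversion pair of $\tau$ to occupy two \emph{vertically adjacent} cells $(p,c)$ and $(p+1,c)$ of a single column $c$, with the larger entry on top: if the two cells were not adjacent, the entry in the intervening cell would itself lie in a second inversion, whatever its value. I would then show that $\tau$ agrees with $st(\tau)$ in all columns to the right of column $c$, while in columns $1,\dots,c$ it is obtained from $st(\tau)$ by a \emph{leftward cascade} --- the entries of rows $p$ and $p+1$ are transposed in a block of consecutive columns ending at $c$, the block forced (by row-standardness together with the absence of any further inversion) to extend leftward until it reaches column $1$ or the first column in which rows $p$ and $p+1$ of $st(\tau)$ contain equal entries. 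From this one reads off precisely which triples $(T,c,p)$ with $T\in S_0(\lambda,\mu)$ arise --- as a list of strict or weak inequalities among the pertinent entries of $T$ --- so that $\tau\mapsto(st(\tau),c,p)$ is a bijection from $S_1(\lambda,\mu)$ onto that set of triples.

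Given this structure, $\Phi$ is defined by un-doing the cascade while relocating one corner box so that the underlying shape becomes $\widetilde\lambda$, then reinserting each entry in the unique order that is strictly increasing along rows and weakly increasing down columns. One checks directly from the structural description that $\Phi(\tau)$ really is an element of $S_0(\widetilde\lambda,\mu)$, paying attention to the two seams that the reshaping introduces, and one builds the inverse $\Psi\colon S_0(\widetilde\lambda,\mu)\to S_1(\lambda,\mu)$ by reversing the box relocation and reconstructing the cascade from the unique row-increase obstruction it creates; then $\Psi\circ\Phi=\mathrm{id}$ and $\Phi\circ\Psi=\mathrm{id}$ give $|S_1(\lambda,\mu)|=|S_0(\widetilde\lambda,\mu)|$.

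I expect the main obstacle to lie in carrying out this program in the presence of repeated entries. Coincidences among the transposed values, or among the entries immediately to their left, can both shorten and lengthen the cascade and can turn several of the governing inequalities from strict to weak; consequently, proving that the cascade is well defined and terminates, and that $\Phi$ lands in $S_0(\widetilde\lambda,\mu)$ rather than in a larger set of row-standard tableaux, requires a careful split into these sub-cases. The degenerate readings of $\widetilde\lambda$ --- when $m=1$, where $S_1(\lambda,\mu)=\emptyset$, or when $n=1$, where the prescribed row of length $n-1=0$ is simply absent --- are handled as trivial base cases, and Theorem \ref{thm: permutation invariance, specific i} (applied to both $\lambda$ and $\widetilde\lambda$) is available to normalize $\mu$ should that streamline the casework.
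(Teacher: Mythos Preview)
Your structural analysis of $S_1(\lambda,\mu)$ is essentially correct: the sole inversion sits in two vertically adjacent cells of some column~$c$, the columns to the right of $c$ agree with $st(\tau)$, and the columns to the left differ from $st(\tau)$ only by a swap of rows $p,p{+}1$ in a contiguous block. (One caveat: when repeated entries appear in rows $p,p{+}1$, the height order in columns left of the tie is still determined by the column-$c$ swap, so the ``cascade stops at the first equality'' description needs refinement; you flag this yourself.) This is a sound parametrisation of $S_1(\lambda,\mu)$ by triples $(T,c,p)$.

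The genuine gap is the map $\Phi$ itself. ``Un-doing the cascade while relocating one corner box\dots then reinserting each entry in the unique order that is strictly increasing along rows and weakly increasing down columns'' is not a procedure: for a given content $\mu$ there is not a unique semistandard filling of $\widetilde\lambda$, so nothing here tells you which element of $S_0(\widetilde\lambda,\mu)$ to produce, nor how the datum $(c,p)$ is encoded in the image. Likewise, ``reconstructing the cascade from the unique row-increase obstruction'' for $\Psi$ presumes that moving the box from $(1,n{+}1)$ to $(m,n)$ creates exactly one well-located defect, which is not argued. Your cascade analysis tracks what happens \emph{left} of the inversion; the bijection must be built from what happens to the \emph{right}, and that half is absent.

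By contrast, the paper's proof is an explicit rightward RSK-type bumping: set $c_k=b$ (the larger inversion entry), and for each $j\geq k$ let $c_{j+1}$ be the smallest entry of column $j{+}1$ strictly exceeding $c_j$; move $c_j$ into $c_{j+1}$'s cell and fill the vacated box by jeu-de-taquin slides (with a specific tie-breaking rule when the two candidate entries are equal). At column $n$ the bumped entry starts a new $(n{+}1)^{\text{st}}$ column, yielding shape $\widetilde\lambda$. The inverse reverse-bumps the lone entry of column $n{+}1$ leftward until it is forced directly into an empty cell, at which point a single inversion is reintroduced. The careful tie-breaking in both directions is exactly what handles repeated entries, and verifying that these two algorithms invert each other column-by-column is the content of the proof. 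If you want to salvage your outline, this bumping machinery is precisely what must replace the undefined ``relocate and reinsert'' step.
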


If $\lambda$ is a rectangular shape of size $m \times n$, then $\widetilde{\lambda}$ is the ``stair-step" shape formed by moving the the lower-right corner in the Young diagram of shape $\lambda$ to a new, $(n+1)^{st}$ column.  See Figure \ref{fig: rectangular to stair-step comparison} for an example of this shape change.  Also pause to notice that this is the only way to rearrange the boxes in a Young diagram of shape $\lambda$ to produce another valid Young diagram whereby the old lower-right corner in $\lambda$ has been moved to a new lower-right corner in a higher row.

\begin{figure}[h!]
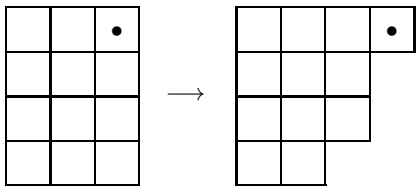

\centering
\begin{footnotesize}
\ydiagram{3,3,3,3}
*[\bullet]{0,0,0,2+1}
\hspace{.1in}
\raisebox{-18.5pt}{$\longrightarrow$}
\hspace{.08in}
\ydiagram{4,3,3,2}
*[\bullet]{3+1}
\end{footnotesize}
\caption{Shape change in the bijection of Theorem \ref{thm: enumeration i=1 rectangular} for $\lambda = (3,3,3,3)$}
\label{fig: rectangular to stair-step comparison}
\end{figure}

\begin{proof}[Proof of Theorem \ref{thm: enumeration i=1 rectangular}]
As in Theorem 3.1 of \cite{BD}, we define two functions $\phi_1: S_1(\lambda,\mu) \rightarrow S_0(\widetilde{\lambda},\mu)$, $\phi_2: S_0(\widetilde{\lambda},\mu) \rightarrow S_1(\lambda,\mu)$ such that $\phi_1$ and $\phi_2$ are inverses of one another.  The general outline of both procedures is in line with the ``repeated bumping" maps defined in \cite{BD}, apart from the addition of conditions that address how bumping behaves in the vicinity of repeated entries.

For the map $\phi_1: S_1(\lambda,\mu) \rightarrow S_0(\widetilde{\lambda},\mu)$, take $\tau \in S_1(\lambda,\mu)$ and identify the sole inversion pair $(a,b)$ of $\tau$.  Assume that $(a,b)$ appears in the $k^{th}$ column of $\tau$; as $(a,b)$ is the tableau's only inversion pair it must be the case that $b$ appears immediately above $a$ in the $k^{th}$ column.  Let $b = c_k$.  Our strategy is to recursively ``bump" a sequence of elements $\lbrace c_k,c_{k+1},\hdots,c_n \rbrace$, one from each column of $\tau$ beginning with the $k^{th}$ column, rightward by one column each.  Our procedure is defined as follows:

\begin{enumerate}
\item Beginning at the site of the inversion pair $(a,b)$ in the $k^{th}$ column, let $c_k = b$.  If there are any columns in $\tau$ to the left of the $k^{th}$, reorder those columns so that they are each non-increasing from top-to-bottom.  This reordering guarantees that no new inversion pairs will be added in leftward columns due to the elimination of the $(a,b)$ inversion pair.
\item At the $j^{th}$ column in the procedure, if $j<n$ let $c_{j+1}$ be the smallest entry in the $(j+1)^{st}$ column such that $c_{j+1} > c_j$.  Then move $c_j$ to the spot occupied by $c_{j+1}$, temporarily allowing two entries in that spot.  This leaves an empty box in the $j^{th}$ column where $c_j$ was formerly located.  Recursively fill any open spots in the $j^{th}$ column by moving the smaller of the two entries directly below or directly to the right of the empty box into that box.  If both entries are equal at any step in this process, move the entry directly to the right of the empty box into that box.  Repeat this procedure until the empty box has been moved into the $(j+1)^{st}$ column.
\item At the $j^{th}$ column in the procedure, if $j=n$ move $c_n$ to the top spot of a new $(n+1)^{st}$ column.  Then slide all entries that were below $c_n$ in the $n^{th}$ column up by one spot.
\end{enumerate}

An example of the full procedure is shown in Figure \ref{fig: rectangular i=1 to i=0 example}.  Notice that this procedure always results in a tableau of the correct shape $\widetilde{\lambda}$, and that the resulting tableau $\widetilde{\lambda}$ lacks inversion pairs because the procedure is defined to ensure that every column is non-increasing from top-to-bottom.  The ``forward bumping" and ``back sliding" procedures are also defined to ensure that the resulting tableau is row-standard.  In particular, observe that the condition in \#2 whereby the rightward of two identical entries is slid left prevents two identical entries from appearing in the same row.  As every step in the procedure is uniquely determined, we may conclude that the procedure defines a well-defined function $\phi_1: S_1(\lambda,\mu) \rightarrow S_0(\widetilde{\lambda},\mu)$.

\begin{figure}[h!]
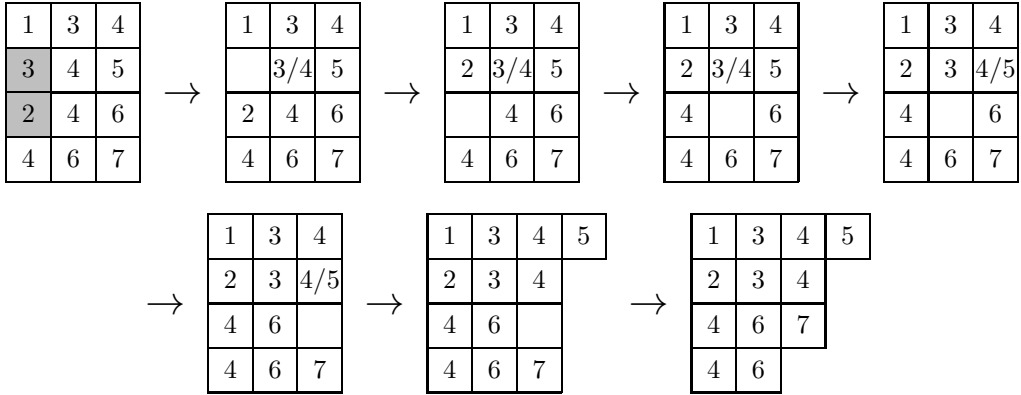

\centering
\begin{small}
\ytableausetup{boxsize=1.65em}
\begin{ytableau}
1 & 3 & 4 \\
*(gray!50) 3 & 4 & 5 \\
*(gray!50) 2 & 4 & 6 \\
4 & 6 & 7
\end{ytableau}
\hspace{.03in}
\raisebox{-21pt}{\scalebox{1.5}{$\rightarrow$}}
\hspace{.03in}
\begin{ytableau}
1 & 3 & 4 \\
& 3/4 & 5 \\
2 & 4 & 6 \\
4 & 6 & 7
\end{ytableau}
\hspace{.03in}
\raisebox{-21pt}{\scalebox{1.5}{$\rightarrow$}}
\hspace{.03in}
\begin{ytableau}
1 & 3 & 4 \\
2 & 3/4 & 5 \\
 & 4 & 6 \\
4 & 6 & 7
\end{ytableau}
\hspace{.03in}
\raisebox{-21pt}{\scalebox{1.5}{$\rightarrow$}}
\hspace{.03in}
\begin{ytableau}
1 & 3 & 4 \\
2 & 3/4 & 5 \\
4 &  & 6 \\
4 & 6 & 7
\end{ytableau}
\hspace{.03in}
\raisebox{-21pt}{\scalebox{1.5}{$\rightarrow$}}
\hspace{.03in}
\begin{ytableau}
1 & 3 & 4 \\
2 & 3 & 4/5 \\
4 & & 6 \\
4 & 6 & 7
\end{ytableau}

\vspace{.15in}

\raisebox{-21pt}{\scalebox{1.5}{$\rightarrow$}}
\hspace{.03in}
\begin{ytableau}
1 & 3 & 4 \\
2 & 3 & 4/5 \\
4 & 6 & \\
4 & 6 & 7
\end{ytableau}
\hspace{.03in}
\raisebox{-21pt}{\scalebox{1.5}{$\rightarrow$}}
\hspace{.03in}
\begin{ytableau}
1 & 3 & 4 & 5\\
2 & 3 & 4 \\
4 & 6 & \\
4 & 6 & 7
\end{ytableau}
\hspace{.03in}
\raisebox{-21pt}{\scalebox{1.5}{$\rightarrow$}}
\hspace{.03in}
\begin{ytableau}
1 & 3 & 4 & 5\\
2 & 3 & 4 \\
4 & 6 & 7\\
4 & 6
\end{ytableau}
\end{small}
\caption{Part of the $S_1(\lambda,\mu) \hookrightarrow S_0(\widetilde{\lambda},\mu)$ bijection for $\lambda = (3,3,3,3)$ and $\mu = 1^1 2^1 3^2 4^4 5^1 6^2 7^1$}
\label{fig: rectangular i=1 to i=0 example}
\end{figure}

For the map $\phi_2 : S_0(\widetilde{\lambda},\mu) \rightarrow S_1(\lambda,\mu)$, take $T \in S_0(\widetilde{\lambda},\mu)$ and define $c_n$ to be the sole entry in the $n^{th}$ column of $T$.  Our strategy is to define a sequence of entries $\lbrace c_n, c_{n-1}, \hdots \rbrace$, where $c_j$ begins in the $(j+1)^{st}$ column of $T$, and then recursively ``reverse bump" $c_j$ into the $j^{th}$ column in a manner that reverses the $\phi_1$ procedure between any two columns.  Our new procedure at the $j^{th}$ column is as follows:

\begin{enumerate}
\item Consider $c_j$, which begins in the $(j+1)^{st}$ column.  There is guaranteed to be an empty box in the $j^{th}$ column.  Recursively fill that empty box with the largest entry from among $c_j$, the entry directly above the empty box, and the entry directly to the left of the empty box.  If the largest value at any point in this process is shared by two or more of those three entries, preference is given to entries that begin in a more leftward position.  Repeat this procedure until the empty box is moved leftward to the $(j-1)^{st}$ column or $c_j$ directly fills the empty box.
\item If the empty box in the $j^{th}$ column is moved leftward to the $(j-1)^{st}$ column, define $c_{j-1}$ to be the largest entry in the $j^{th}$ that is strictly less than $c_j$.  Move $c_j$ in the box occupied by $c_{j-1}$, temporarily producing a box with two entries, and then repeat step \#1 with $c_{j-1}$ and the $(j-1)^{st}$ column.
\item If the empty box in the $j^{th}$ column is directly filled by $c_j$, we introduce a single new inversion pair $(a,c_j)$ in the $j^{th}$ column with $c_j$ and the entry lying directly above the box into which $c_j$ was inserted.  Do this by flipping the rows containing $c_j$ and $a$ from the $j^{th}$ column leftward.  Flipping leftward entries along with $c_j$ and $a$ ensures that no additional inversion pairs are added in leftward columns.
\end{enumerate}

For an example of this second procedure, see Figure \ref{fig: rectangular i=0 to i=1 example}.  Notice that the ``sliding" and ``reverse bumping" rules of steps \#1 and \#2, along with the addendum addressing when identical entries are involved, ensure that the tableau is row-standard and column-semistandard.  This means that the only inversion in the resulting tableau is the one introduced in step \#3. Also notice that the resulting tableaux always admits an inversion at this spot in the $j^{th}$ column since $c_j < c_{j+1}$ and our procedure ensures that the entry $c_j$ always drops down at least one row at this final step (see the proof of Theorem 3.1 in \cite{BD} for additional discussion of this final fact).  As every step in the procedure is uniquely determined, we conclude that the procedure induces a well-defined function $\phi_2: S_0(\widetilde{\lambda},\mu) \rightarrow S_1(\lambda,\mu)$.

\begin{figure}[h!]
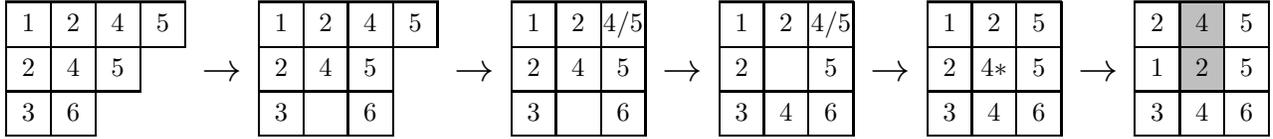

\centering
\begin{small}
\ytableausetup{boxsize=1.65em}
\begin{ytableau}
1 & 2 & 4 & 5 \\
2 & 4 & 5 \\
3 & 6
\end{ytableau}
\hspace{.0in}
\raisebox{-13pt}{\scalebox{1.5}{$\rightarrow$}}
\hspace{.0in}
\begin{ytableau}
1 & 2 & 4 & 5 \\
2 & 4 & 5 \\
3 & & 6
\end{ytableau}
\hspace{.0in}
\raisebox{-13pt}{\scalebox{1.5}{$\rightarrow$}}
\hspace{.0in}
\begin{ytableau}
1 & 2 & 4/5 \\
2 & 4 & 5 \\
3 & & 6
\end{ytableau}
\hspace{.0in}
\raisebox{-13pt}{\scalebox{1.5}{$\rightarrow$}}
\hspace{.0in}
\begin{ytableau}
1 & 2 & 4/5 \\
2 &  & 5 \\
3 & 4 & 6
\end{ytableau}
\hspace{.0in}
\raisebox{-13pt}{\scalebox{1.5}{$\rightarrow$}}
\hspace{.0in}
\begin{ytableau}
1 & 2 & 5 \\
2 & 4* & 5 \\
3 & 4 & 6
\end{ytableau}
\hspace{.0in}
\raisebox{-13pt}{\scalebox{1.5}{$\rightarrow$}}
\hspace{.0in}
\begin{ytableau}
2 & *(gray!50) 4 & 5 \\
1 & *(gray!50) 2 & 5 \\
3 & 4 & 6
\end{ytableau}
\end{small}
\caption{Part of the $S_0(\widetilde{\lambda},\mu) \hookrightarrow S_1(\lambda,\mu)$ bijection for $\lambda = (3,3,3), \mu = 1^1 2^2 3^1 4^2 5^2 6^1$.  Notice how the first column has been reordered at the final step to preserve its height ordering.}
\label{fig: rectangular i=0 to i=1 example}
\end{figure}

Our maps $\phi_1$ and $\phi_2$ have been constructed so that they are inverses of one another.  In particular, the intermediate steps of the two procedures coincide after each column.  This holds true even if specific ``bumps" / ``reverse bumps" aren't direct reversals of one another when working within a specific column, as demonstrated by an inability to directly reverse specific steps in the examples of Figure \ref{fig: rectangular i=1 to i=0 example} or Figure \ref{fig: rectangular i=0 to i=1 example}.  Since $\phi_2 \circ \phi_1 (\tau) = \tau$ for all $\tau \in S_1(\lambda,\mu)$ and $\phi_1 \circ \phi_2 (T) = T$ for all $T \in S_0(\widetilde{\lambda},\mu)$, we may deduce that both maps are bijections and that $\vert S_1(\lambda,\mu) \vert = \vert S_0 (\widetilde{\lambda},\mu) \vert$.
\end{proof}

Theorem \ref{thm: enumeration i=1 rectangular} admits a generalization to non-rectangular tableaux that utilizes slight modifications of the procedures for $\phi_1$ and $\phi_2$.  As with Theorem 3.2 of \cite{BD}, this requires the introduction of additional terminology that describes the resulting shape change in the tableaux.

So consider the tableau shape $\lambda = (\lambda_1,\hdots,\lambda_m)$.  Define $d_i = \lambda_i - \lambda_{i+1}$ for $1 \leq i < m$ and $d_m = \lambda_m$, meaning that $d_i > 0$ if the $i^{th}$ row contains a ``lower-right corner" and $d_i = 0$ otherwise.  Then define $\widetilde{d}_i = \lambda_{i-1} - \lambda_i$ for $1 < i \leq m$ and $\widetilde{d}_1 = \infty$, so that $\widetilde{d}_i > 0$ if and only if an additional box may be added to the $i^{th}$ row without yielding an invalid tableau shape.  With $d_i$ and $\widetilde{d}_i$ defined for each row of the tableau shape $\lambda$, we have the following:

\begin{theorem}
\label{thm: enumeration i=1 general}
Consider the tableau shape $\lambda = (\lambda_1,\hdots,\lambda_m)$ with $m > 1$, and let $\mu$ be any content compatible with $\lambda$.  Then:
\begin{center}
$\displaystyle{\vert S_1((\lambda_1,\hdots,\lambda_m),\mu) \vert = \sum_E \vert S_0((\lambda_1 + \epsilon_1,\hdots, \lambda_m + \epsilon_m),\mu) \vert}$
\end{center}
\noindent Where the summation is over all tuples $E = (\epsilon_1,\hdots,\epsilon_m)$ such that $\epsilon_i= 0,\pm 1$ for all $i$, $\epsilon_i = -1$ for precisely one $i = i_1$ with $i_1 >1$ and $d_{i_1}>0$, and $\epsilon_i = 1$ for precisely one $i = i_2$ with $i_2 < i_1$ and $\widetilde{d}_{i_2}>0$.
\end{theorem}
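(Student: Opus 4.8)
The plan is to generalize the bumping bijection behind Theorem \ref{thm: enumeration i=1 rectangular}, adjusting the forward and backward procedures so that the single box that is created (respectively removed) is allowed to appear at one of several rows dictated by the staircase profile of $\lambda$. For an admissible tuple $E = (\epsilon_1,\dots,\epsilon_m)$ of the type described in the statement (so $\epsilon_{i_1} = -1$ at a corner row $i_1 > 1$ with $d_{i_1}>0$, and $\epsilon_{i_2} = +1$ at an addable row $i_2 < i_1$ with $\widetilde d_{i_2}>0$), write $\lambda_E = (\lambda_1+\epsilon_1,\dots,\lambda_m+\epsilon_m)$. Since distinct admissible tuples $E$ yield distinct shapes $\lambda_E$, the sets $S_0(\lambda_E,\mu)$ are automatically pairwise disjoint, so it suffices to exhibit mutually inverse maps $\phi_1 : S_1(\lambda,\mu) \to \bigsqcup_E S_0(\lambda_E,\mu)$ and $\phi_2$ in the reverse direction; the count then follows immediately.

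First I would define $\phi_1$. Given $\tau \in S_1(\lambda,\mu)$, locate its unique inversion pair $(a,b)$; as in the proof of Theorem \ref{thm: enumeration i=1 rectangular}, uniqueness of the inversion forces $b$ to sit directly above $a$ in some column $k$. Set $c_k = b$, reorder columns $1,\dots,k-1$ to be weakly decreasing from top to bottom, and then run the same forward ``bump-and-slide'' recursion as in Theorem \ref{thm: enumeration i=1 rectangular}: at the $j$-th stage $c_j$ displaces the smallest entry $c_{j+1} > c_j$ of column $j+1$, and the box it vacates migrates one column rightward by repeatedly importing the smaller of its lower/right neighbours, with ties broken toward the right. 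The only modification is the termination rule: rather than always running to a fixed last column $n$, the process halts at the first stage where the active element can no longer be bumped rightward, because it has reached the right edge of the diagram in its current row. At that moment the active element is pushed vertically into the unique new box one may append in a row at or above its current row while keeping the diagram a valid Young diagram, and the traveling hole is flushed to the corner of the appropriate lower row, exactly as the $j=n$ step of Theorem \ref{thm: enumeration i=1 rectangular} flushes the hole to the bottom of the last column. I would then verify, following the rectangular case, that every step is deterministic, that the output is row-standard and column-semistandard (hence $0$-inverted), that the content is preserved, and --- the genuinely new point --- that the hole exits at a corner row $i_1 > 1$ with $d_{i_1} > 0$ while the active element settles into a strictly higher addable row $i_2 < i_1$ with $\widetilde d_{i_2} > 0$, so the output has shape $\lambda_E$ for an admissible $E$.

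Second I would define $\phi_2$ on each summand: for admissible $E$ and $T \in S_0(\lambda_E,\mu)$, read off $i_1$ and $i_2$ from $E$, start at the appended box in row $i_2$, and run the reverse bump-and-slide of Theorem \ref{thm: enumeration i=1 rectangular} (reverse-slide taking the largest of the active value and its upper/left neighbours, ties toward the more leftward entry; reverse-bump selecting the largest entry strictly smaller than the active value) until the traveling hole reaches the vacated corner of row $i_1$, at which point the active element drops into that column and a single inversion is created, the affected rows being flipped leftward to preserve the height order. As in Theorem \ref{thm: enumeration i=1 rectangular}, one checks determinism, row-standardness, and that the output lies in $S_1(\lambda,\mu)$ (the dropped element falls at least one row, so exactly one inversion appears). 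Mutual inversion of $\phi_1$ and $\phi_2$ then follows from the same column-by-column comparison used in the rectangular proof: after each column the two runs are in identical configurations, so $\phi_2\circ\phi_1 = \mathrm{id}$ and $\phi_1\circ\phi_2 = \mathrm{id}$, the only extra bookkeeping being that $\phi_1$ lands in the summand indexed by the tuple $E$ it produces and $\phi_2$ reads the same $E$ off the shape of its input. Hence $\phi_1$ and $\phi_2$ are inverse bijections and $\vert S_1(\lambda,\mu)\vert = \sum_E \vert S_0(\lambda_E,\mu)\vert$.

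The step I expect to be the main obstacle is precisely this ``genuinely new point'' in $\phi_1$: proving that the forward procedure always terminates in a clean shape change of the prescribed type --- that the descending hole really does exit at an honest corner in a row $>1$, that the ascending active element really does land in an addable row strictly above it, and that no illegal configuration (running off the diagram, or adding/removing more than one box) can occur. In the rectangular case all of this is automatic because every row ends in the same column; for general $\lambda$ one must track the monotone descent of the hole and the monotone ascent of the active cell against the profile $(\lambda_1,\dots,\lambda_m)$ and argue that they meet the boundary in compatible rows $i_2 < i_1$. The auxiliary bookkeeping around repeated entries --- the tie-breaking conventions that keep identical entries out of a common row --- is, as in the rectangular proof, routine but must be stated carefully.
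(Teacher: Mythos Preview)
Your proposal is correct and follows essentially the same approach as the paper: extend the $\phi_1,\phi_2$ bumping procedures of the rectangular case verbatim, and observe that for non-rectangular $\lambda$ the forward procedure may terminate at any of several ending configurations, each corresponding to one of the admissible shape changes $\lambda_E$. The paper's own proof is in fact briefer than yours---it simply asserts that the result ``follows directly from the procedures of Theorem~\ref{thm: enumeration i=1 rectangular}, as with the proof of Theorem 3.2 of \cite{BD}''---so the obstacle you single out (tracking the hole and the active element against the profile of $\lambda$ to confirm they exit at compatible rows $i_2<i_1$) is precisely the work the paper delegates to the cited standard-tableau argument in \cite{BD}.
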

\begin{proof}
Follows directly from the procedures of Theorem \ref{thm: enumeration i=1 rectangular}, as with the proof of Theorem 3.2 of \cite{BD}.  For an example of the shape change in this general bijection, which involves identifying all possible ending points of the $\phi_1$ procedure from the proof of Theorem \ref{thm: enumeration i=1 rectangular}, see Figure \ref{fig: general shape change comparison}.
\end{proof}

\begin{figure}[h!]
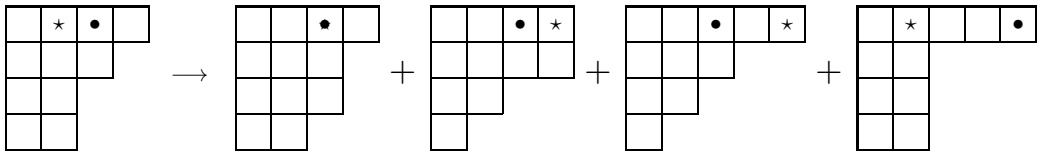

\centering
\scalebox{.8}{
\ydiagram{4,3,2,2}
*[\bullet]{0,2+1}
*[\star]{0,0,0,1+1}
\hspace{.1in}
\raisebox{-18.5pt}{$\longrightarrow$}
\hspace{.08in}
\ydiagram{4,3,3,2}
*[\bullet]{0,2+1}
*[\star]{0,0,2+1}
\hspace{.02in}
\raisebox{-18.5pt}{\scalebox{1.5}{$+$}}
\hspace{.0in}
\ydiagram{4,4,2,1}
*[\bullet]{0,2+1}
*[\star]{0,3+1}
\hspace{.02in}
\raisebox{-18.5pt}{\scalebox{1.5}{$+$}}
\hspace{.0in}
\ydiagram{5,3,2,1}
*[\bullet]{0,2+1}
*[\star]{4+1}
\hspace{.02in}
\raisebox{-18.5pt}{\scalebox{1.5}{$+$}}
\hspace{.0in}
\ydiagram{5,2,2,2}
*[\bullet]{4+1}
*[\star]{0,0,0,1+1}
}
\caption{Shape change in the bijection of Theorem \ref{thm: enumeration i=1 general} when $\lambda = (4,3,2,2)$}
\label{fig: general shape change comparison}
\end{figure}

\bibliographystyle{amsplain}
\bibliography{biblio}

\end{document}